\def\namedlabel#1#2{\begingroup
#2%
\def\@currentlabel{#2}%
\phantomsection\label{#1}\endgroup
}
\theoremstyle{theorem} 
\newtheorem{theorem}{Theorem}[section]
\newtheorem{corollary}[theorem]{Corollary}
\newtheorem{lemma}[theorem]{Lemma}
\newtheorem{proposition}[theorem]{Proposition}
\newtheorem{theoremx}{Theorem}
\theoremstyle{definition} 
\newtheorem{definition}[theorem]{Definition}
\newtheorem{remark}[theorem]{Remark}
\numberwithin{equation}{subsection}
\newcommand{\cR}{\mathscr{R}}
\newcommand{\twopartdef}[4]
{
	\left\{
		\begin{array}{ll}
			#1 & \mbox{if } #2 \\
			#3 & \mbox{if } #4
		\end{array}
	\right.
}
\newcommand{\twopartdefo}[3]
{
	\left\{
		\begin{array}{ll}
			#1 & \mbox{if } #2 \\
			#3 & \mbox{otherwise.}
		\end{array}
	\right.
}
\newcommand{\threepartdef}[6]
{
	\left\{
		\begin{array}{lll}
			#1 & \mbox{if } #2 \\
			#3 & \mbox{if } #4 \\
			#5 & \mbox{if } #6
		\end{array}
	\right.
}
\def\@tocline#1#2#3#4#5#6#7{\relax
  \ifnum #1>\c@tocdepth 
  \else
    \par \addpenalty\@secpenalty\addvspace{#2}%
    \begingroup \hyphenpenalty\@M
    \@ifempty{#4}{%
      \@tempdima\csname r@tocindent\number#1\endcsname\relax
    }{%
      \@tempdima#4\relax
    }%
    \parindent\z@ \leftskip#3\relax \advance\leftskip\@tempdima\relax
    \rightskip\@pnumwidth plus4em \parfillskip-\@pnumwidth
    #5\leavevmode\hskip-\@tempdima
      \ifcase #1
       \or\or \hskip 1.9em \or \hskip 2em \else \hskip 3em \fi%
      #6\nobreak\relax
    \dotfill\hbox to\@pnumwidth{\@tocpagenum{#7}}\par
    \nobreak
    \endgroup
  \fi}
\newcommand{\Ass}{\operatorname{Ass}}
\newcommand{\gr}{\operatorname{gr}}
\newcommand{\Ht}{\operatorname{ht}}
\newcommand{\p}{\mathfrak{p}}
\newcommand{\q}{\mathfrak{q}}
\newcommand{\m}{\mathfrak{m}}
\definecolor{blue-violet}{rgb}{0.54, 0.17, 0.89}
\definecolor{Blue}{rgb}{0.01, 0.28, 1.0}
\definecolor{gGreen}{rgb}{0.2, 0.8, 0.2}
\definecolor{Green}{rgb}{0.04, 0.85, 0.32}
\begin{document}

\title[On the Symbolic F-splitness of Binomial Edge Ideals]{On the Symbolic F-splitness of Binomial Edge Ideals}

\author[P. Ram\'irez-Moreno]{Pedro Ram\'irez-Moreno{$^1$}}
\address{Departamento de Matem\'aticas, Centro de Investigaci\'on en Matem\'aticas, M\'exico}
\email{pedro.ramirez@cimat.mx}

\thanks{{$^1$}This author was partially supported by CONAHCyT Scholarship 771118 and by CONAHCyT Grant 284598.}

\subjclass[2020]{Primary 13A35, 13A30, 05C25, 05E40; Secondary 05C78.}
\keywords{Symbolic F-Split; Binomial Edge Ideals; Strongly F-Regular; Weakly Closed.}

\maketitle

\begin{abstract}
We study the symbolic $F$-splitness of families of binomial edge ideals. We also study the strong $F$-regularity of the symbolic blowup algebras of families of binomial edge ideals. We make use of Fedder-like criteria and combinatorial properties of the graphs associated to the binomial edge ideals in order to approach the aforementioned scenarios.
\end{abstract}

\setcounter{tocdepth}{1}
\tableofcontents

\section{Introduction}

Symbolic powers of an ideal $I$ have seen utility in commutative algebra and algebraic geometry over the last few decades. In the case of polynomial rings over a field $k$, they can be characterized as the intersection of powers of maximal ideals containing $I$ \cite{EisenbudHochster}. Furthermore, if $k$ is algebraically closed, one can relate them to the vanishing of differential operators \cite{Zariski}. The drawback is that generators for symbolic powers are usually harder to compute than generators for ordinary powers. Moreover, it turns out that symbolic powers and ordinary powers do not always coincide.

For monomial edge ideals, it is known that their symbolic and ordinary powers coincide if and only if the graph associated to the ideal is bipartite \cite{MR1283294}. In the binomial edge ideal case it is yet unknown if such a characterization exists. Efforts on this line of research have yielded some partial results. Some examples of families of graphs such that the symbolic and ordinary powers of their associated binomial edge ideals coincide are closed graphs \cite{MR4143239}, complete multipartite graphs \cite{Ohtani} and caterpillar graphs \cite{MR4520291}.

Related to this question is the $F$-splitness of the associated graded ring of these families of ideals and the symbolic $F$-splitness of these families of ideals \cite[Proposition 5.5]{destefani2021blowup}. Symbolic $F$-splitness is a stronger kind of $F$-singularity: if an ideal $I$ is symbolic $F$-split then $I$ is $F$-split. In general, both notions are not the same \cite{destefani2021blowup}. 

On this paper we study the symbolic $F$-splitness of binomial edge ideals of complete multipartite graphs, caterpillar graphs, graphs whose binomial edge ideal has only $2$ associated primes, and traceable unmixed graphs. In order to do this we develop combinatorial criteria that allows us to determine if an ideal is symbolic $F$-split. Using the same ideas, we also study the strong $F$-regularity of symbolic Rees algebras of these families of binomial edge ideals.

For a graph $G$, we denote $J_G$ the binomial edge ideal associated to $G$ and we denote $\cR^s(J_G)$ the symbolic Rees algebra associated to $J_G$.
The main results of this paper are the following ones.

\begin{theoremx}
Let $G$ be a simple connected graph such that it is one of the following graphs:
\begin{itemize}
\item[$1)$] A complete multipartite graph.
\item[$2)$] A caterpillar graph.
\item[$3)$] A graph such that $|\Ass(J_G)| \leq 2$.
\item[$4)$] A traceable graph such that $J_G$ is unmixed.
\end{itemize}
Then $J_G$ is symbolic F-split.
\end{theoremx}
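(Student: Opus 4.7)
My plan is to establish the symbolic $F$-splitness of $J_G$ by verifying a Fedder-type criterion that produces a witness element in the ambient polynomial ring. As developed in \cite{destefani2021blowup}, this criterion asks (roughly) for an element $f \in R$ such that multiplication by $f$ carries ordinary powers of $J_G$ into the appropriate symbolic powers, while $f \notin \m^{[p]}$, where $\m$ is the homogeneous maximal ideal of $R$. The task is to exhibit such an $f$ in each of the four cases using the combinatorial structure of $G$.

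For binomial edge ideals the key input is the description of the minimal primes $P_T(G)$, indexed by subsets $T \subset V(G)$ satisfying a cut-point condition, together with the identity $J_G^{(n)} = \bigcap_T P_T(G)^n$. For cases (1) and (2)---complete multipartite and caterpillar graphs---symbolic and ordinary powers coincide by \cite{Ohtani} and \cite{MR4520291} respectively, so the problem reduces to exhibiting a witness for ordinary $F$-splitness of $J_G$ that is compatible with all powers. Such a witness can be built as a product of $2 \times 2$ minors drawn from an explicit generating set of $J_G^{p-1}$, ordered along the edges of $G$ according to the closed-graph structure (for caterpillars) or the multipartition (for complete multipartite graphs), so that each variable appears with multiplicity strictly less than $p$.

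Case (3), where $|\Ass(J_G)| \leq 2$, is the most rigid: $G$ is either complete, in which case $J_G$ is prime and the conclusion is immediate, or has exactly one ``cut set'' giving $\Ass(J_G) = \{P_\emptyset(G), P_T(G)\}$. In the latter subcase the witness $f$ can be taken as a product of the form $f_\emptyset^{p-1} f_T^{p-1}$, with $f_\emptyset \in P_\emptyset(G)$ and $f_T \in P_T(G)$ chosen so that their variable supports overlap only in a controlled way, which keeps $f$ outside $\m^{[p]}$. Case (4), traceable unmixed graphs, is the most involved: fixing a Hamiltonian path $v_1, \ldots, v_n$, unmixedness forces all $P_T(G)$ to share a common height, and the path gives a linear order along which the contributions of each prime to the witness $f$ can be serialized and then checked variable-by-variable.

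The main obstacle is case (4). Even under unmixedness, traceable graphs can have many minimal primes whose variable supports overlap substantially, and certifying $f \notin \m^{[p]}$ uniformly in $p$ requires using the Hamiltonian path to impose an explicit linear ordering of the contributions from each $P_T(G)$ and then performing an individual degree count for each variable. A milder version of this issue appears for caterpillars with long spines in case (2), so the natural strategy is to work out case (4) first and then specialize the argument to cases (1) and (2), treating case (3) separately because of its near-triviality.
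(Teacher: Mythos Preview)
Your overall framework (Fedder-type witness, combinatorics of the cut-set primes $\p_S$) is the right one, but you have the relative difficulty of the cases inverted, and your proposed reduction from case~(4) to cases~(1) and~(2) will not work.

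In the paper, a \emph{single} witness element
\[
f \;=\; y_1\, f_{1,2}\, f_{2,3}\cdots f_{d-1,d}\, x_d
\]
is used in all four cases; the criterion applied is Lemma~\ref{lemma SymbolicFsplitnessCriteriaOnMinimialPrimes}, namely that $f\in \p_S^{\,\Ht(\p_S)}$ for every minimal prime $\p_S$ of $J_G$ and $f^{p-1}\notin \m^{[p]}$. With this $f$, case~(4) is by far the \emph{easiest}: traceability lets you label so that the path $1,2,\dots,d$ lies in $G$, giving $f\in J_G^{\,d-1}$; unmixedness forces every $\p_S$ to have height exactly $d-1$, so $f\in \p_S^{\,d-1}$ for all $S$ and you are done in one line. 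There is no need to ``serialize contributions from each $P_T(G)$'' or perform variable-by-variable degree counts across many primes.

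By contrast, cases~(1) and~(2) are where the real combinatorial work lies, precisely because complete multipartite graphs and caterpillar graphs are \emph{not} unmixed in general: the heights $\Ht(\p_S)$ vary with $S$, so one must verify $f\in \p_S^{\,\Ht(\p_S)}$ separately for each cut set $S$, computing $\Ht(\p_S)=|S|+d-c(S)$ and exhibiting enough factors of $f$ lying in $\p_S$. Your plan to ``specialize the argument'' from case~(4) therefore cannot succeed, since the unmixed hypothesis is essential to that argument and is absent in~(1) and~(2). Note also that the equality of symbolic and ordinary powers for~(1) and~(2) is not used as input to the symbolic $F$-splitness proof; it is an independent fact (and in the paper is only invoked afterward to deduce $F$-splitness of the Rees algebra and associated graded ring). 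Finally, in case~(3) a witness of the shape $f_\emptyset^{p-1} f_T^{p-1}$ is not what is needed: since $\Ht(\p_\emptyset)=d-1$, the criterion demands an element of $\p_\emptyset^{\,d-1}$, not merely of $\p_\emptyset$.
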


\begin{theoremx}
Let $G$ be a simple connected graph such that it is one of the following graphs:
\begin{itemize}
\item[$1$)] A complete multipartite graph.
\item[$2)$] A graph such that $|\Ass(J_G)| \leq 2$.
\item[$3)$] A closed graph such that $J_G$ is unmixed.
\end{itemize}
Then $\cR^s(J_G)$ is strongly $F$-regular.
\end{theoremx}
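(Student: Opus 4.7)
The plan is to combine the symbolic $F$-splitness already obtained in Theorem~A with a Fedder-like criterion for strong $F$-regularity of $\cR^s(J_G)$. Such a criterion should assert, roughly, that $\cR^s(J_G)$ is strongly $F$-regular precisely when it is Noetherian and, for every nonzero $c$ in the ambient polynomial ring $S$, one can find $e\geq 1$, $n\geq 0$, and $f \in J_G^{(n(p^e-1))}$ such that $cf \notin \m^{[p^e]}$. This reduces the problem, in each of the three cases, to an explicit combinatorial existence statement. I would develop this criterion in parallel with the one used for symbolic $F$-splitness in Theorem~A, since the two differ essentially only by a ``test element'' quantifier.

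For (1) and (3) we have $J_G^{(n)}=J_G^n$ (by Ohtani for complete multipartite graphs and by Ene--Herzog--Hibi for closed graphs, combined here with the unmixedness hypothesis in the latter case), so $\cR^s(J_G)=\cR(J_G)$ is the ordinary Rees algebra. Strong $F$-regularity then reduces to an explicit computation: for complete multipartite graphs, the minimal primes $P_T$ and the $2$-minor generators of $J_G$ are transparent enough that a parameterized family of splittings satisfying the criterion can be produced directly; for closed unmixed graphs, the labelling that turns the natural generators of $J_G$ into a Gröbner basis with squarefree initial ideal provides a distinguished splitting element whose Frobenius powers can be modified to cover every nonzero $c$.

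For (2), the case $|\Ass(J_G)|=1$ forces $J_G$ to be prime and $G$ to be complete, reducing the claim to the classical strong $F$-regularity of the Rees algebra of the ideal of $2\times 2$ minors of a generic $2\times n$ matrix. For $|\Ass(J_G)|=2$, I would apply the Herzog--Hibi--Hreinsd\'ottir--Kahle--Rauh cut-set description of $\Ass(J_G)$ to conclude that $\Ass(J_G)=\{P_\emptyset,P_T\}$ for a unique non-trivial cut-set $T$. The rigidity of this configuration should allow one to read off generators of each $J_G^{(n)}$ explicitly (via admissible paths with respect to the single non-trivial cut $T$) and verify the Fedder-like criterion by hand.

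The main obstacle is case (2), since here $J_G^{(n)} \neq J_G^n$ in general and one must genuinely compute with symbolic powers rather than with ordinary ones. The delicate step is, given an arbitrary nonzero $c$, to exhibit an $f \in J_G^{(n(p^e-1))}$ with $cf \notin \m^{[p^e]}$. I expect this to follow from a careful analysis of the admissible-path generators of the symbolic powers, together with the observation that having only two associated primes forces a very restricted cut-structure on $G$ that makes the combinatorial verification manageable.
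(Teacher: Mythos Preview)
Your approach has a factual error and misses the paper's key idea. You claim that in case~(2) one has $J_G^{(n)} \neq J_G^n$ in general and must ``genuinely compute with symbolic powers''; this is false. Jahani--Bayati--Rahmati proved that symbolic and ordinary powers coincide whenever $|\Ass(J_G)| \le 2$, and the paper invokes exactly this. So in all three cases $\cR^s(J_G)$ is the ordinary Rees algebra and Noetherianity comes for free; your plan to analyze admissible-path generators of genuine symbolic powers in case~(2) is aimed at a difficulty that does not exist.

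More importantly, the paper does not use a ``for every nonzero $c$'' criterion at all. It uses the Hochster--Huneke theorem: $\cR^s(J_G)$ is strongly $F$-regular once one finds a \emph{single} $c$ for which $(\cR^s(J_G))_c$ is already known to be strongly $F$-regular and the map $1 \mapsto c^{1/p}$ splits. The splitting is arranged by a Fedder-type lemma (write the standard element $f = y_1 f_{1,2}\cdots f_{d-1,d}\, x_d$ as $cg$ with $g \in \bigcap_i \q_i^{h_i-1}$ and $f^{p-1}\notin\m^{[p]}$), and the actual content is the choice of $c$ so that localization collapses the problem. For $|\Ass(J_G)|=2$ one takes $c=y_1$: since $y_1 \in \p_{V_0}$, localizing kills that prime and $(\cR^s(J_G))_{y_1} = (\cR^s(I_2(X)))_{y_1}$, which is strongly $F$-regular by the determinantal case. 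Complete multipartite graphs reduce to the two-prime case via the same $c=y_1$. For closed unmixed graphs one takes $c = f_{1,2}$, which lies in every minimal prime, so $(\cR^s(J_G))_{f_{1,2}} = R_{f_{1,2}}[t]$ is a localized polynomial ring. Your universal-$c$ route, even granting the criterion you propose, gives no mechanism for producing the required $f$ for an arbitrary test element $c$, and nothing in the combinatorics you sketch addresses this quantifier; the paper's localization trick sidesteps it entirely.
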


As a byproduct of the proof of these results, we prove that simple connected graphs $G$ such that  $|\Ass(J_G)| = 2$ and simple connected bipartite accessible graphs are weakly closed.

\section{Background}\label{sctn background}
\subsection{Symbolic Powers}

We begin recalling the definition of symbolic powers. Moreover, we state some properties that symbolic powers hold.

\begin{definition}\label{dfn symbolicPowers}
Let $I$ be a radical ideal in a Noetherian ring $R$ and let $\q_1, \dots, \q_m$ be its minimal primes. We define the $n$-th symbolic power of $I$, and denote it by $I^{(n)}$, as
$$ I^{(n)} = \bigcap_{i = 1}^m \q_i^n R_{\q_i} \cap R.$$
\end{definition}

Observe that from Definition \ref{dfn symbolicPowers} we have $\q^{(n)} = \q^n R_{\q} \cap R$ for every prime ideal $\q$, and so, $I^{(n)} = \bigcap_{i = 1}^m \q_i^{(n)}$.

Suppose $I$ is a radical ideal, then for every positive integers $n, m$ the following properties hold:
\begin{itemize}
\item[1)] $I^{(1)} = I$.
\item[2)] $I^n \subseteq I^{(n)}$.
\item[3)] $I^{(n)} I^{(m)} \subseteq I^{(n+m)}$.
\item[4)] If $I$ is prime, then $I^{(n)}$ is $I$-primary.
\end{itemize}

The symbolic Rees algebra of $I$ is denoted by $\cR^s(I)$ and is defined as follows:

$$\cR^s(I) = R \oplus I^{(1)}t \oplus I^{(2)}t^2 \oplus I^{(3)}t^3 \dots \subseteq R[t].$$

Now we state a result due to Eisenbud and Hochster that characterizes symbolic powers in the polynomial case.

\begin{theorem}[{\cite{EisenbudHochster}}]
Let $k$ be a field and let $I$ be a radical ideal of $R = k[x_1, \dots, x_n]$. Then
$$ I^{(n)} = \bigcap_{\m \in A} \m^n, $$
where $A = \{ \m \subseteq R \:|\: I \subseteq \m \text{ and } \m \text{ is a maximal ideal }\}.$
\end{theorem}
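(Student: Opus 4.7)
The plan is to reduce to the case that $I$ is a prime ideal, and then prove both inclusions separately using the structure of polynomial rings as Jacobson and regular.

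For the reduction, decompose $I = \bigcap_{i=1}^{m} \q_i$ into its minimal primes (possible since $I$ is radical). By Definition~\ref{dfn symbolicPowers}, $I^{(n)} = \bigcap_{i} \q_i^{(n)}$. Moreover, the set $A$ of maximal ideals containing $I$ decomposes as $A = \bigcup_i A_i$ with $A_i = \{\m \text{ maximal} : \q_i \subseteq \m\}$, so $\bigcap_{\m \in A} \m^n = \bigcap_{i} \bigcap_{\m \in A_i} \m^n$. Matching the two decompositions term-by-term reduces the theorem to the prime case: for every prime $\p \subseteq R$, $\p^{(n)} = \bigcap_{\m \supseteq \p} \m^n$, with the intersection taken over maximal ideals.

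For the containment $\p^{(n)} \subseteq \bigcap_{\m \supseteq \p} \m^n$, fix a maximal $\m \supseteq \p$. Then $R_\m$ is a regular local ring, and $R_\p$ is a further localization of $R_\m$ since $R \setminus \m \subseteq R \setminus \p$. Commuting localization with the intersection $\p^n R_\p \cap R$ gives $\p^{(n)} R_\m = \p^n R_\p \cap R_\m$, which is the $\p R_\m$-primary component of $\p^n R_\m$. Passing to the $\m$-adic completion $\widehat{R_\m}$ — a formal power series ring over the residue field $R/\m$ by Cohen's structure theorem — and using that polynomial rings are excellent (so $\p \widehat{R_\m}$ is radical with all minimal primes contained in $\widehat{\m}$), a primary decomposition argument yields $\p^{(n)} R_\m \subseteq \m^n R_\m$. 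Contracting back and using $\m^n R_\m \cap R = \m^n$ (valid since $\m$ is maximal) gives $\p^{(n)} \subseteq \m^n$.

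For the reverse containment, let $f \in \bigcap_{\m \supseteq \p} \m^n$; one must exhibit $s \in R \setminus \p$ with $sf \in \p^n$, equivalently show $(\p^n :_R f) \not\subseteq \p$. The key tools are the Jacobson property — which gives $\p = \bigcap_{\m \supseteq \p} \m$, so the primes above $\p$ collectively detect $\p$ — together with the faithful flatness of each completion $R_\m \to \widehat{R_\m}$; vanishing of $f$ in every Artinian quotient $R/\m^n$ is then promoted, via a patching argument across the various $\widehat{R_\m}$, to vanishing of the image of $f$ in $R_\p/\p^n R_\p$, which is exactly the content of $f \in \p^{(n)}$. The cleanest conceptual route uses the Zariski--Nagata theorem (with Hasse--Schmidt derivatives in positive characteristic), which characterizes $\p^{(n)}$ as the ideal of polynomials whose derivatives of order less than $n$ all lie in $\p$: by Jacobson, this is equivalent to the same derivatives vanishing at each closed point of $V(\p)$, which is precisely the condition $f \in \m^n$ for all $\m \supseteq \p$. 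The main obstacle is exactly this reverse containment: extracting a uniform denominator $s$ from purely local vanishing at each $\m$ has no direct combinatorial shortcut and genuinely requires both regularity and the Jacobson property of $R$, without which the theorem fails.
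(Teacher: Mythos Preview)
The paper does not give a proof of this theorem: it is stated in the background section with a citation to Eisenbud--Hochster and no argument is supplied. There is therefore nothing to compare your proposal against.

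That said, a few remarks on your sketch. The reduction to the prime case is clean and correct. Your argument for the containment $\p^{(n)} \subseteq \m^n$ is incomplete: you assert that ``a primary decomposition argument yields $\p^{(n)} R_\m \subseteq \m^n R_\m$'' after completing, but this is precisely the nontrivial step. Knowing $sf \in \p^n \subseteq \m^n$ for some $s \notin \p$ does not directly give $f \in \m^n$, since $s$ may well lie in $\m$. The standard route here uses that in a regular local ring the $\m$-adic order function is a valuation (equivalently, $\gr_\m R$ is a domain), together with an induction on $\dim R/\p$ or a flat descent from the completion; your sketch gestures at the latter but does not actually carry it out. For the reverse containment you essentially invoke Zariski--Nagata as a black box, which is circular if the goal is to prove the Eisenbud--Hochster theorem itself, since Zariski--Nagata (in the form you state it, with differential operators characterizing $\p^{(n)}$) is typically proved \emph{via} this result or an equivalent one. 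If you want a self-contained proof you will need to supply the actual argument extracting the uniform denominator $s$, not just name the obstacle.
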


\subsection{Methods in Prime Characteristic}

Throughout this section $R$ denotes a  commutative Noetherian ring with one of prime characteristic $p$. Let $F:R \to R$ be defined by $r \mapsto r^p$. $F$ is a ring homomorphism and is called the Frobenious morphism. We denote the composition of $F$ with itself $e$ times by $F^e$, for any positive integer $e$. Note that we can view $R$ as an $R$-mod via $F^e$, and in this case, we denote it by $F^e_{*}R$. Observe that $F:R \mapsto F^e_*R$ is a map of $R$-modules. For any $R$-mod map $\psi: M \to N$, we say that $\psi$ is a split monomorphism, or that $\psi$ splits over $M$, if there is an $R$-mod map $\phi: N \to M$ such that $\phi \circ \psi$ is the identity map on $M$. In such case we call $\phi$ a splitting of $\psi$. Notice that if $\psi$ is a split monomorphism, it is indeed a monomorphism. We say that $R$ is $F$-finite if $F_*R$ is a finitely generated $R$-module. If $R$ is $F$-finite, then any finitely generated $R$-algebra is also $F$-finite. When $R$ is reduced, we denote the ring of $p^e$-th roots of $R$ by $R^{1/{p^e}}$, which is a ring extension of $R$. We say that $R$ is $F$-pure if for every $R$-module $M$, the induced map $R \otimes M \to F_* R \otimes M$ is injective. We say that $R$ is $F$-split if $F:R \to F_*R$ is a split monomorphism. If $R$ is reduced we can identify $F^e$ with the inclusion $R \subseteq R^{1/p^e}$ for any $e \geq 1$, and so, $R$ is $F$-split if and only if $R \subseteq R^{1/p^e}$ is a split monomorphism. When $R$ is $F$-finite, $R$ is $F$-pure if and only if $R$ is $F$-split. If $R$ is a domain, we say that $R$ is strongly $F$-regular if for every non zero $r \in R$, there is $e \geq 1$ such that the $R$-linear map $R \to R^{1/{p^e}}$, defined by $1 \mapsto c^{1/{p^e}}$, splits over $R$.

The following result is a criterion that determines if a ring is strongly $F$-regular.

\begin{theorem}[{\cite{HochsterHuneke}}]\label{thm HochsterHunekeCriterion sFr}
Let $R$ be a an $F$-finite Noetherian domain of prime characteristic $p$. Let $c$ be a non zero element of $R$ such that $R_c$ is strongly $F$-regular. Then, $R$ is strongly $F$-regular if and only if the map $R \to R^{1/p^e}$ defined by $1 \mapsto c^{1/p^e}$ splits over $R$ for some $e$. 
\end{theorem}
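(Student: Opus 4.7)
The plan is to prove the two directions separately. The forward implication is essentially trivial: if $R$ is strongly $F$-regular, the definition applied to $c$ produces the required splitting. So I will concentrate on the converse.

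For the converse, fix a nonzero $d \in R$. The goal is to exhibit an integer $e'$ and an $R$-linear map $R^{1/p^{e'}} \to R$ sending $d^{1/p^{e'}}$ to $1$. First I use the hypothesis on $R_c$: since $R_c$ is strongly $F$-regular, there is a splitting $\phi_c : (R_c)^{1/p^f} \to R_c$ with $\phi_c(d^{1/p^f}) = 1$ for some $f$. By $F$-finiteness, $R^{1/p^f}$ is finitely generated over $R$, so restricting $\phi_c$ to $R^{1/p^f}$ and multiplying by a sufficiently high power of $c$ to clear denominators yields an $R$-linear map $\psi : R^{1/p^f} \to R$ with $\psi(d^{1/p^f}) = c^N$ for some $N \geq 0$.

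The remaining issue is that the image is $c^N$ rather than $1$; I absorb this using the given splitting $\theta : R^{1/p^e} \to R$ of $1 \mapsto c^{1/p^e}$. The key construction is an iterate of $\theta$. Define
\[
\Theta_k := \theta \circ \theta^{1/p^e} \circ \theta^{1/p^{2e}} \circ \cdots \circ \theta^{1/p^{(k-1)e}} : R^{1/p^{ke}} \to R,
\]
where $\theta^{1/p^{je}}(y) := \theta(y^{p^{je}})^{1/p^{je}}$ is the standard Frobenius twist, which is $R$-linear (in fact $R^{1/p^{je}}$-linear). A direct induction shows $\Theta_k(c^{a_k/p^{ke}}) = 1$, where $a_k := 1 + p^e + p^{2e} + \cdots + p^{(k-1)e}$. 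Choose $k$ large enough that $a_k \geq N$; since multiplication by an element of $R^{1/p^{ke}}$ is an $R$-linear operation on $R^{1/p^{ke}}$, the map $\Theta'_k(x) := \Theta_k(c^{(a_k - N)/p^{ke}} \, x)$ is $R$-linear and sends $c^{N/p^{ke}}$ to $\Theta_k(c^{a_k/p^{ke}}) = 1$.

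Finally, I combine the pieces. The Frobenius twist $\psi^{1/p^{ke}} : R^{1/p^{f+ke}} \to R^{1/p^{ke}}$ is $R$-linear and sends $d^{1/p^{f+ke}}$ to $(c^N)^{1/p^{ke}} = c^{N/p^{ke}}$. Hence $\Theta'_k \circ \psi^{1/p^{ke}} : R^{1/p^{f+ke}} \to R$ is $R$-linear and sends $d^{1/p^{f+ke}}$ to $1$, giving the required splitting of $R \to R^{1/p^{f+ke}}$, $1 \mapsto d^{1/p^{f+ke}}$. As $d$ was arbitrary and nonzero, this shows that $R$ is strongly $F$-regular. I expect the main technical point to be the inductive computation identifying the element $c^{a_k/p^{ke}}$ whose image under $\Theta_k$ equals $1$; once this is in place, the rest is a clean combination of clearing denominators via $F$-finiteness and exploiting the automatic $R$-linearity of multiplication on $R^{1/p^{ke}}$.
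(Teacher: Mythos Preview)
Your argument is correct and is essentially the standard Hochster--Huneke proof: clear denominators using $F$-finiteness to turn the splitting over $R_c$ into an $R$-map with image $c^N$, then iterate the given splitting $\theta$ to absorb the power $c^N$. The inductive computation $\Theta_k(c^{a_k/p^{ke}})=1$ with $a_k=1+p^e+\cdots+p^{(k-1)e}$ is exactly right, relying on the $R^{1/p^{je}}$-linearity of the twist $\theta^{1/p^{je}}$.

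Note, however, that the paper does not supply its own proof of this statement: it is quoted as a result of Hochster and Huneke and used as a black box (in Lemma~\ref{lemma CriteriaForStrongFregularity}). So there is no paper proof to compare against; your write-up simply fills in the standard argument behind the citation.
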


Now we define what does it mean for an ideal to be symbolic $F$-split.

\begin{definition}[{\cite{destefani2021blowup}}]
Let $\{I_n\}_{n \geq 0}$ be a sequence of ideals in a reduced ring $R$. We say that $\{I_n\}_{n \geq 0}$ is an $F$-split filtration if the following holds:
\begin{itemize}
\item[i)]   $I_0 = R$.
\item[ii)]  $I_{n+1} \subseteq I_{n}$, for every $n \geq 0$.
\item[iii)] $I_n I_m \subseteq I_{n + m}$, for every $n, m \geq 0$.
\item[iv)] There is a splitting $\phi: R^{1/p} \to R$, of $R \subseteq R^{1/p}$, such that $\phi((I_{np+1})^{1/p}) \subseteq I_{n+1}$ for every $n \geq 0$.
\end{itemize}

Let $I$ be an ideal of $R$. We say that $I$ is symbolic $F$-split if $\{I^{(n)}\}_{n \geq 0}$ is an $F$-split filtration.
\end{definition}

It turns out that if $\{I_n\}_{n \geq 0}$  is an $F$-split filtration, then $R/I_1$ is $F$-split and $I_1$ is radical.

\subsection{Binomial Edge Ideals}

\begin{definition}
A simple graph $G$ is an undirected graph which has no loops, that is, there are no edges between a vertex and itself, and it has no more than one edge between each pair of vertices.
\end{definition}

\begin{definition}[{\cite{MR2669070}}]
Let $G$ be a simple graph with vertex set $[d] = \{1, \dots, d\}$ and edges $E$. The binomial edge ideal associated to $G$ is the ideal $J_G$ in the polynomial ring $R = k[x_1, \dots, x_d,$ $y_1, \dots, y_d]$ where $k$ is a field, defined in the following way.
$$ J_G = (f_{i,j} \:|\: \{i, j\} \in E),$$
where $f_{i,j}= x_i y_j - x_j y_i$.
\end{definition}

Let $G$ be a simple graph with vertex set $[d]$. Let $S \subseteq [d]$ and let $T = [d] \setminus  S$. Let $G_T$ be the induced subgraph of $G$ on the vertex set $T$. We also denote $G_T$ by $G \setminus S$. Let $c_{G}(S)$ be the amount of connected components of $G_T$, if there is no risk of confusion, we write $c(S)$ instead. Let $G_{T_1}, \dots, G_{T_{c(S)}}$ be the connected components of $G_T$. Let $\tilde{G}_{T_i}$ be the complete graph on the vertex set of $G_{T_i}$. We define 
$$\p_{S} = J_{\tilde{G}_{T_1}} + \dots + J_{G_{T_{c(S)}}} + (x_i, y_i \:|\: i \in S).$$

The ideal $\p_S$ is a prime ideal of $R$ and it has height $|S| + d - c(S)$ \cite{MR2669070}.

Let $I_2(X) = (f_{i,j} \:|\: 1 \leq i < j \leq d)$. Observe that $\p_\emptyset = I_2(X)$ if $G$ is connected.

The following proposition is a criterion that allows us to determine if a prime ideal $\p_S$ is a minimal prime of $J_G$.

\begin{proposition}[{\cite{MR2669070}}]\label{prop CriteriaForMinPrimesBEI}
Let $G$ be a connected simple graph with vertex set $[d]$ and let $S \subseteq [d]$. Then, $\p_S$ is a minimal prime of $J_G$ if and only if any of the following conditions holds:
\begin{itemize}
\item[$1)$] $S = \emptyset$.
\item[$2)$] $c(S \setminus \{s\}) < c(S)$ for every $s \in S$.
\end{itemize}
\end{proposition}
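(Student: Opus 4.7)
The plan is to leverage the known primary decomposition $J_G = \bigcap_{S \subseteq [d]} \p_S$ from \cite{MR2669070}, so that every minimal prime of $J_G$ already has the form $\p_S$ and the problem reduces to deciding which $\p_S$ contain no strictly smaller $\p_{S'}$. The first step I would record is that $\p_{S'} \subseteq \p_S$ forces $S' \subseteq S$: quotienting $R$ first by $(x_i, y_i : i \in S)$ and then by the binomial generators presents $R/\p_S$ as a tensor product $\bigotimes_k k[x_\ell, y_\ell : \ell \in T_k]/J_{\tilde{G}_{T_k}}$ of determinantal coordinate rings of Segre varieties, hence a domain in which every $x_i, y_i$ with $i \notin S$ remains nonzero; consequently the only variables of $R$ lying in $\p_S$ are those indexed by $S$ itself.

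The core step is a precise analysis of when $\p_{S \setminus \{s\}} \subseteq \p_S$ holds for $s \in S$. Let $T = [d] \setminus S$ and let $k$ denote the number of connected components of $G_T$ adjacent to $s$; returning $s$ to the complement changes the component count from $c(S)$ to $c(S) + 1$, $c(S)$, or $c(S) - k + 1$, according to whether $k = 0$, $k = 1$, or $k \geq 2$. In the first two cases (equivalently $c(S \setminus \{s\}) \geq c(S)$) each generator of $\p_{S \setminus \{s\}}$ either already lives in some $J_{\tilde{G}_{T_j}}$ or has the form $x_i y_s - x_s y_i \in (x_s, y_s)$, so $\p_{S \setminus \{s\}} \subseteq \p_S$, strictly so because $x_s \in \p_S \setminus \p_{S \setminus \{s\}}$. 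In the case $k \geq 2$, pick $i \in T_a$ and $j \in T_b$ in two of the merged components, both adjacent to $s$; then $i, j, s$ all belong to a single component of $G_{[d] \setminus (S \setminus \{s\})}$, so $f_{i,j} \in \p_{S \setminus \{s\}}$, whereas substituting $(x_\ell, y_\ell) = (1, 0)$ for $\ell \in T_a$ and $(0, 1)$ for $\ell \in T_b$ (with any values of rank $\leq 1$ elsewhere) gives a point of $V(\p_S)$ at which $f_{i,j} = 1$, so $f_{i,j} \notin \p_S$.

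These two ingredients deliver both directions. For the only-if direction, if condition (2) fails then some $s \in S$ satisfies $c(S \setminus \{s\}) \geq c(S)$ and the analysis above gives $\p_{S \setminus \{s\}} \subsetneq \p_S$, so $\p_S$ is not minimal. For the if direction, condition (1) makes minimality automatic since $S' \subseteq \emptyset$ forces $S' = \emptyset$; assuming condition (2) and $\p_{S'} \subsetneq \p_S$ for contradiction, the first step gives $S' \subsetneq S$, and choosing any $s \in S \setminus S'$, condition (2) produces $i, j$ in distinct components of $G_T$ both adjacent to $s$; since $S' \subseteq S \setminus \{s\}$, the vertices $i, j, s$ all lie in a single component of $G_{[d] \setminus S'}$, so $f_{i,j} \in \p_{S'}$, contradicting $f_{i,j} \notin \p_S$. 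The main obstacle will be the $k \geq 2$ case of the central analysis: one must ensure $f_{i,j}$ is genuinely absent from $\p_S$ rather than merely from the obvious summands, and the cleanest way is to exhibit an explicit point of $V(\p_S)$ at which $f_{i,j}$ does not vanish, as above.
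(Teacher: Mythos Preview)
The paper does not supply its own proof of this proposition: it is quoted in the background section with the citation \cite{MR2669070} and no proof environment follows it. There is therefore nothing in the present paper to compare your proposal against.

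For what it is worth, your argument is essentially the one given in the original source \cite{MR2669070}: one uses the decomposition $J_G=\bigcap_{S}\p_S$, observes that $\p_{S'}\subseteq\p_S$ forces $S'\subseteq S$ because the only variables lying in $\p_S$ are those indexed by $S$, and then reduces minimality to the combinatorial condition on $c(S\setminus\{s\})$ by analysing how many components of $G_{[d]\setminus S}$ the vertex $s$ is adjacent to. Your treatment of the case $k\geq 2$, exhibiting an explicit point of $V(\p_S)$ at which $f_{i,j}$ does not vanish, is a clean way to certify $f_{i,j}\notin\p_S$; the original paper argues the same non-containment directly from the description of the generators of $\p_S$. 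Both routes are short and yield the same conclusion.
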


We say that $S$ is a cut set of the connected graph $G$ with vertex set $[d]$ if $S$ is such that $\p_S$ is a minimal prime of $J_G$.

\section{Criteria for Symbolic F-Splitness}\label{sctn criterionsSymbolicFsplitness}

We begin this section with a remark that is used during the proof of Proposition \ref{prop SymbolicColonInterEquality}. We also give a proof for this remark since we didn't find one in the literature.

\begin{remark}\label{rmrk AssColonContainment}
Let $I, J$ be ideals of a ring $R$ such that $I = (f_1, \dots, f_m)$. Then $$\Ass(J:I) \subseteq \Ass(J).$$
\end{remark}

\begin{proof}
First we show that the proposition is true when $m = 1$. Let $f = f_1$. We want to show that $\Ass(J:f) \subseteq \Ass(J)$. Consider the $R$-mod homomorphism given by 
\begin{align*}
	R/(J:f) & \to R/J\\
	[r] & \mapsto f[r].
\end{align*}
This morphism is injective, and hence, $\Ass(J:f) \subseteq \Ass(J)$. Now, we proceed to the case $m \geq 2$. Since $I = (f_1, \dots, f_m)$, we have that $(J:I) = \bigcap_{i=1}^{m}(J:f_i)$. Consider the $R$-mod homomorphism given by
\begin{align*}
	R/(\bigcap_{i=1}^{m}(J:f_i)) & \to \bigoplus_{i=1}^{m}R/(J:f_i)\\
	[r] & \mapsto ([r],\dots,[r]).
\end{align*}
This map is injective and thus, 
$$\Ass(J:I) = \Ass(\bigcap_{i=1}^{m}(J:f_i)) \subseteq \bigcup_{i=1}^{m}\Ass(J:f_i) \subseteq \Ass(J)$$
\end{proof}

\begin{proposition}\label{prop SymbolicColonInterContainment} 
Let $J$ be a radical ideal in a regular domain of prime characteristic $p$. Let $\p_1, \dots, \p_m$ be the minimal primes of $J$. Then
$$\bigcap_{i = 1}^m ((\p_i^{(a)})^{[p^e]}:\p_i^{(b)}) \subseteq (J^{(a)})^{[p^e]}:J^{(b)} $$
for every $a,b,e \geq 1$.
\end{proposition}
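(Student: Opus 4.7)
The plan is a straightforward element chase combined with one nontrivial ring-theoretic input. Take an arbitrary $r \in \bigcap_{i=1}^m \left((\p_i^{(a)})^{[p^e]} : \p_i^{(b)}\right)$; the goal is to verify $r \cdot J^{(b)} \subseteq (J^{(a)})^{[p^e]}$, which is the defining condition for $r$ to belong to the right-hand side.

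Since $J$ is radical with minimal primes $\p_1, \dots, \p_m$, Definition~\ref{dfn symbolicPowers} gives $J^{(b)} = \bigcap_{i=1}^m \p_i^{(b)}$, and in particular $J^{(b)} \subseteq \p_i^{(b)}$ for every $i$. The colon condition on $r$ yields $r \cdot \p_i^{(b)} \subseteq (\p_i^{(a)})^{[p^e]}$, so combining these two facts produces $r \cdot J^{(b)} \subseteq (\p_i^{(a)})^{[p^e]}$ for each $i$. Intersecting over $i$ gives
$$ r \cdot J^{(b)} \subseteq \bigcap_{i=1}^m (\p_i^{(a)})^{[p^e]}. $$

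The crux is then to identify this intersection with $(J^{(a)})^{[p^e]}$. Because $R$ is a regular ring of prime characteristic $p$, Kunz's theorem guarantees that the Frobenius iterate $F^e \colon R \to R$ is flat, and flat ring homomorphisms commute with finite intersections of ideals; since the extension of an ideal $I \subseteq R$ along $F^e$ is precisely $I^{[p^e]}$, we obtain
$$ \bigcap_{i=1}^m (\p_i^{(a)})^{[p^e]} = \left( \bigcap_{i=1}^m \p_i^{(a)} \right)^{[p^e]} = (J^{(a)})^{[p^e]}. $$
Substituting back gives $r \cdot J^{(b)} \subseteq (J^{(a)})^{[p^e]}$, which is exactly $r \in (J^{(a)})^{[p^e]} : J^{(b)}$, so the desired containment holds.

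The only nontrivial ingredient is the commutation of Frobenius powers with finite intersections of ideals, which is where the regularity hypothesis on $R$ enters via Kunz's flatness theorem; without regularity this step can fail, and so this is the main conceptual obstacle. Everything else in the argument is a routine manipulation of the colon relation together with the primary-decomposition formula $J^{(b)} = \bigcap_i \p_i^{(b)}$, and in particular the preceding Remark~\ref{rmrk AssColonContainment} does not appear to be needed for this direction of containment.
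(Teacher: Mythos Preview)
Your proof is correct and follows essentially the same route as the paper: an element chase reducing to the identity $\bigcap_i (\p_i^{(a)})^{[p^e]} = (J^{(a)})^{[p^e]}$, which the paper also invokes (stating only that Frobenius powers commute with finite intersections, whereas you supply the justification via Kunz's flatness theorem). Your closing observation is also accurate: Remark~\ref{rmrk AssColonContainment} is not used here but rather in the later proof of the equality in Proposition~\ref{prop SymbolicColonInterEquality}.
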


\begin{proof}
Let $x \in  \bigcap_{i = 1}^m ((\p_i^{(a)})^{[p^e]}:\p_i^{(b)})$. We need to show that $xy \in (J^{(a)})^{[p^e]}$ for any $y \in J^{(b)} = \bigcap_{i = 1}^m \p_i ^{(b)}$. For such $y$, we know that $xy \in (\p_i^{(a)})^{[p^e]}$ for every $i$, so it suffices to prove that $ \bigcap_{i = 1}^m (\p_i^{(a)})^{[p^e]} \subseteq (J^{(a)})^{[p^e]}$.
Since Frobenious powers commutes with finite intersections, we have the equality.
\end{proof}

\begin{lemma}\label{lemma PrimarySymbolicColon}
Let $R$ be a regular domain of prime characteristic $p$ and let $\p \neq 0$ be a prime ideal of $R$. Then $(\p^{(a)})^{[p^e]}:\p^{(b)}$ is $\p$-primary, for every $a, b, e \geq 1$ such that $ap^e > b$.
\end{lemma}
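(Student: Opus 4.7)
The plan is to reduce the statement to two tasks: (i) that $J := (\p^{(a)})^{[p^e]}:\p^{(b)}$ is a proper ideal, and (ii) that $(\p^{(a)})^{[p^e]}$ is itself $\p$-primary. Combined with Remark \ref{rmrk AssColonContainment}, which gives $\Ass(R/J)\subseteq \Ass(R/(\p^{(a)})^{[p^e]})$, these would force $\Ass(R/J)=\{\p\}$, i.e., $J$ is $\p$-primary.

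For (i) I would localize at $\p$: writing $\mathfrak{m}=\p R_\p$ and using the identity $(I^a)^{[p^e]}=(I^{[p^e]})^a$, one has $\p^{(b)}R_\p=\mathfrak{m}^b$ and $(\p^{(a)})^{[p^e]}R_\p=(\mathfrak{m}^{[p^e]})^a\subseteq \mathfrak{m}^{ap^e}$. The $b$-th power of any element of a regular system of parameters, say $x_1^b$, lies in $\mathfrak{m}^b\setminus \mathfrak{m}^{b+1}$ because the associated graded ring of $R_\p$ is a polynomial ring in the residues of the parameters, so the hypothesis $ap^e>b$ gives $x_1^b\notin\mathfrak{m}^{ap^e}$. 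Hence $\p^{(b)}R_\p\not\subseteq(\p^{(a)})^{[p^e]}R_\p$, and $J\neq R$.

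For (ii) I would invoke Kunz's theorem that the Frobenius morphism of the regular ring $R$ is flat. For any prime $\mathfrak{q}$ of $R$, the induced map $F^e\colon R_\mathfrak{q}\to R_\mathfrak{q}$ is flat local, and its closed fibre $R_\mathfrak{q}/(\mathfrak{q}R_\mathfrak{q})^{[p^e]}$ is Artinian (its defining ideal is generated by the regular sequence of $p^e$-th powers of a regular system of parameters), so it has depth zero. The depth formula for flat local homomorphisms then yields
\[
\operatorname{depth}_{R_\mathfrak{q}}\left(R_\mathfrak{q}/(\p^{(a)})^{[p^e]}R_\mathfrak{q}\right)=\operatorname{depth}_{R_\mathfrak{q}}\left(R_\mathfrak{q}/\p^{(a)}R_\mathfrak{q}\right).
\]
Since $\p^{(a)}$ is $\p$-primary the right-hand side vanishes precisely when $\mathfrak{q}=\p$; for $\mathfrak{q}$ not containing $\p$ the ideal $(\p^{(a)})^{[p^e]}$ extends to the unit ideal in $R_\mathfrak{q}$. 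Translating through the characterization $\mathfrak{q}\in\Ass(R/I) \iff \operatorname{depth}_{R_\mathfrak{q}}(R_\mathfrak{q}/IR_\mathfrak{q})=0$, this gives $\Ass(R/(\p^{(a)})^{[p^e]})=\{\p\}$.

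I expect the main obstacle to be step (ii): the preservation of primariness under a bracket power relies essentially on the regularity of $R$ through Kunz's theorem, since Frobenius flatness is what unlocks the depth formula above. Without this hypothesis one can easily produce embedded primes in $I^{[p^e]}$, so there is no apparent shortcut avoiding the regularity input.
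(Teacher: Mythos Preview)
Your proposal is correct and follows essentially the same route as the paper: both arguments establish that the colon is proper by localizing at $\p$ and comparing $\m^b$ with $\m^{ap^e}$, both rely on $(\p^{(a)})^{[p^e]}$ being $\p$-primary, and both finish by noting that passing to a colon ideal cannot enlarge the set of associated primes. The only difference is that you supply a full justification of the $\p$-primariness of $(\p^{(a)})^{[p^e]}$ via Kunz's theorem and the depth formula, whereas the paper simply invokes this as a known consequence of regularity; otherwise the arguments coincide.
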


\begin{proof}
Since $R$ is Noetherian, we can write $\p^{(b)} = (f_1,\dots,f_r)$ for some $f_i \in R$. 

First we show that $f_i \not\in (\p^{(a)})^{[p^e]}$ for some $i$. We proceed by contradiction, suppose that $f_i  \in (\p^{(a)})^{[p^e]}$ for every $i$. This implies that $(\p^{(a)})^{[p^e]}:\p^{(b)} = R$. By localizing and completing, we get that
$(\m^{a})^{[p^e]}:\m^{b} = \widehat{R}_{\p}$, where $\m$ is the maximal ideal of $\widehat{R}_{\p}$, a power series ring over a field. Hence, $\m^{b} \subseteq (\m^{a})^{[p^e]} \subseteq \m^{ap^e} \subsetneq \m^b$ , which is a contradiction. Therefore there is an $i$ such that $f_i \not\in (\p^{(a)})^{[p^e]}$. 

Since $(\p^{(a)})^{[p^e]}$ is $\p$-primary and $f_i \not\in (\p^{(a)})^{[p^e]}$ for some $i$, we have that $(\p^{(a)})^{[p^e]}:f_i$ is $\p$-primary \cite[Lemma 4.4]{Atiyah}. Since $(\p^{(a)})^{[p^e]}:\p^{(b)} = \bigcap_{j = 1}^r ((\p^{(a)})^{[p^e]}:f_j)$, we conclude that $(\p^{(a)})^{[p^e]}:\p^{(b)}$ is $\p$-primary.
\end{proof}

Before the next result, we state the following remark.

\begin{remark}\label{rmrk IdealContainmentAssociatedPrimes}
Ideal containment can be verified locally in the following sense. If $I, J$ are ideals of $R$ such that $I_{\p} \subseteq J_{\p}$ for every $\p \in \Ass(J)$, then $I \subseteq J$.
\end{remark}

\begin{proposition}\label{prop SymbolicColonInterEquality} 
Let $J$ be a radical ideal in a regular domain of prime characteristic $p$. Let $\p_1, \dots, \p_m$ be the minimal primes of $J$. Then
$$\bigcap_{i = 1}^m ((\p_i^{(a)})^{[p]}:\p_i^{(b)}) = (J^{(a)})^{[p]}:J^{(b)} $$
for every $a,b \geq 1$.
\end{proposition}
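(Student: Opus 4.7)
The $\subseteq$ direction is exactly the $e = 1$ case of Proposition \ref{prop SymbolicColonInterContainment}, so the task reduces to establishing the reverse containment. Write $A$ for the left-hand side and $B$ for the right-hand side. The plan is to verify $B \subseteq A$ by means of the local criterion in Remark \ref{rmrk IdealContainmentAssociatedPrimes}: it suffices to prove $B_\q \subseteq A_\q$ for every $\q \in \Ass(A)$.

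First I would control $\Ass(A)$. Since $R$ is regular of characteristic $p$, each ideal $(\p_i^{(a)})^{[p]}$ is $\p_i$-primary (this is the fact that is invoked implicitly in the proof of Lemma \ref{lemma PrimarySymbolicColon}, and which rests on Frobenius flatness), so $\Ass((\p_i^{(a)})^{[p]}) = \{\p_i\}$. Combined with Remark \ref{rmrk AssColonContainment}, this yields
$$\Ass\bigl((\p_i^{(a)})^{[p]}:\p_i^{(b)}\bigr) \subseteq \{\p_i\}.$$
Because the associated primes of a finite intersection lie in the union of the associated primes of the factors, one obtains $\Ass(A) \subseteq \{\p_1,\dots,\p_m\}$.

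Next I would fix a minimal prime $\p_j$ of $J$ and localize at $\p_j$. The primes $\p_1,\dots,\p_m$ are pairwise incomparable (they are all minimal over $J$), so $\p_i R_{\p_j} = R_{\p_j}$ for $i \neq j$, and hence $\p_i^{(a)} R_{\p_j} = R_{\p_j}$ for every $a$ and every $i \neq j$. Using that localization commutes with finite intersections, with symbolic powers, with Frobenius powers, and with colons of finitely generated ideals, this gives
$$B_{\p_j} \;=\; \bigl(\p_j^a R_{\p_j}\bigr)^{[p]} :_{R_{\p_j}} \p_j^b R_{\p_j} \;=\; A_{\p_j},$$
since in $A$ the factors indexed by $i \neq j$ localize to $R_{\p_j}$ and the $i=j$ factor localizes to the same colon. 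In particular $B_\q \subseteq A_\q$ for every $\q \in \Ass(A)$, so Remark \ref{rmrk IdealContainmentAssociatedPrimes} yields $B \subseteq A$.

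The main subtle point is the $\p_i$-primariness of $(\p_i^{(a)})^{[p]}$, i.e.\ the absence of embedded primes strictly containing $\p_i$; after localizing at $\p_i$ one recovers this inside the regular local ring $R_{\p_i}$, but the argument in $R$ itself is what requires Frobenius flatness. Once that input is granted, the rest of the proof is the routine localization calculation sketched above, and in fact the two sides agree at every $\p_j$, not merely after including in $B$.
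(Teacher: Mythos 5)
Your proposal is correct and follows essentially the same route as the paper: the forward containment from Proposition \ref{prop SymbolicColonInterContainment}, the reduction of the reverse containment to a local check via Remark \ref{rmrk IdealContainmentAssociatedPrimes}, the identification of the associated primes of the intersection with $\{\p_1,\dots,\p_m\}$ using Remark \ref{rmrk AssColonContainment} and the $\p_i$-primariness of $(\p_i^{(a)})^{[p]}$ in a regular ring, and the final localization at each $\p_j$. You merely spell out two steps the paper leaves implicit (why $(\p_i^{(a)})^{[p]}$ is primary, and the explicit computation of both localizations), which is fine.
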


\begin{proof}
Let $a, b \geq 1$. We proceed by double containment.

From Proposition \ref{prop SymbolicColonInterContainment}, we already know that
$$\bigcap_{i = 1}^m ((\p_i^{(a)})^{[p]}:\p_i^{(b)}) \subseteq (J^{(a)})^{[p]}:J^{(b)}. $$

Now we proceed to prove that $$ (J^{(a)})^{[p]}:J^{(b)} \subseteq \bigcap_{i = 1}^m ((\p_i^{(a)})^{[p]}:\p_i^{(b)}). $$

In order to prove this, we proceed to verify the containment locally as stated in Remark \ref{rmrk IdealContainmentAssociatedPrimes}.

Observe that 
\begin{eqnarray*}
\Ass(\bigcap_{i = 1}^m ((\p_i^{(a)})^{[p]}:\p_i^{(b)})) 
& \subseteq & \bigcup_{i=1}^{m}\Ass((\p_i^{(a)})^{[p]}:\p_i^{(b)}) \\
& \subseteq & \bigcup_{i=1}^{m}\Ass((\p_i^{(a)})^{[p]}), \text{by Remark \ref{rmrk AssColonContainment}}  \\
& = &  \bigcup_{i=1}^{m}\Ass(\p_i^{(a)}), \text{because $R$ is regular} \\
& = & \{ \p_1, \dots, \p_m \}, \text{because $\p_i^{(a)}$ is $\p_i$-primary}\\
& = & \Ass(J).
\end{eqnarray*} 

Since $((J^{(a)})^{[p]}:J^{(b)})R_{\p_i} = ((\p_i^{(a)})^{[p]}:\p_i^{(b)})R_{\p_i}$ for every $i$, we conclude that $$(J^{(a)})^{[p]}:J^{(b)} \subseteq \bigcap_{i = 1}^m ((\p_i^{(a)})^{[p]}:\p_i^{(b)}).$$

Thus, $$\bigcap_{i = 1}^m ((\p_i^{(a)})^{[p]}:\p_i^{(b)}) = (J^{(a)})^{[p]}:J^{(b)}. $$
\end{proof}

\begin{corollary}\label{cor AssColonEqualsAssIdeal}
Let $J$ be a non zero radical ideal in a regular domain of prime characteristic $p$. Then
$$\Ass((J^{(a)})^{[p]}:J^{(b)}) = \Ass(J),$$
for every $a,b \geq 1$ such that $ap > b$.
\end{corollary}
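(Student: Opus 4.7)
The plan is to leverage Proposition \ref{prop SymbolicColonInterEquality} and Lemma \ref{lemma PrimarySymbolicColon} to exhibit an explicit primary decomposition of the colon ideal $K := (J^{(a)})^{[p]}:J^{(b)}$, and then read off its associated primes directly. By Proposition \ref{prop SymbolicColonInterEquality},
$$ K = \bigcap_{i=1}^{m} Q_i, \qquad \text{where } Q_i := (\p_i^{(a)})^{[p]}:\p_i^{(b)}, $$
and by Lemma \ref{lemma PrimarySymbolicColon} (applied with $e = 1$, which is valid since $ap > b$ and each $\p_i$ is nonzero as $J \neq 0$), each $Q_i$ is $\p_i$-primary.

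For the containment $\Ass(K) \subseteq \Ass(J)$, I would reuse exactly the chain of inclusions from the proof of Proposition \ref{prop SymbolicColonInterEquality}: passing to the intersection, combined with Remark \ref{rmrk AssColonContainment}, gives
$$ \Ass(K) \subseteq \bigcup_{i=1}^{m} \Ass(Q_i) \subseteq \bigcup_{i=1}^{m} \Ass((\p_i^{(a)})^{[p]}) = \bigcup_{i=1}^{m} \Ass(\p_i^{(a)}) = \{\p_1, \dots, \p_m\} = \Ass(J), $$
where the penultimate equality uses regularity of $R$ (so Frobenius preserves primary-ness) and $\p_i$-primariness of $\p_i^{(a)}$.

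For the reverse containment, my plan is to show $\Min(K) = \{\p_1, \dots, \p_m\}$ and invoke $\Min(K) \subseteq \Ass(K)$. Taking radicals of the decomposition above and using that $J$ is radical,
$$ \sqrt{K} = \bigcap_{i=1}^{m} \sqrt{Q_i} = \bigcap_{i=1}^{m} \p_i = J. $$
Therefore $V(K) = V(J)$, so $K$ and $J$ have the same minimal primes, namely $\{\p_1, \dots, \p_m\} = \Ass(J)$. Since minimal primes are always associated primes, this yields $\Ass(J) \subseteq \Ass(K)$ and completes the double containment.

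I do not expect a genuine obstacle here; the two hypothesis-heavy facts (the intersection formula and the primary-ness of each $Q_i$) already appear earlier in the section, and the argument reduces to a standard observation about associated primes versus minimal primes of an ideal whose radical we can compute. The only point that requires a little care is to verify that the bound $ap > b$ is exactly what Lemma \ref{lemma PrimarySymbolicColon} needs when $e = 1$, which it is.
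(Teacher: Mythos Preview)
Your proposal is correct and follows essentially the same approach as the paper: both invoke Proposition~\ref{prop SymbolicColonInterEquality} and Lemma~\ref{lemma PrimarySymbolicColon} to write $K = \bigcap_i Q_i$ with each $Q_i$ being $\p_i$-primary. The paper's execution is slightly more direct: once the $Q_i$ are known to be $\p_i$-primary with the $\p_i$ distinct (and incomparable, being minimal primes of $J$), this intersection is already a minimal primary decomposition of $K$, so $\Ass(K) = \{\p_1,\dots,\p_m\} = \Ass(J)$ immediately, without the separate double-containment argument via Remark~\ref{rmrk AssColonContainment} and the radical computation that you carry out.
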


\begin{proof}
Let $a,b \geq 1$ such that $ap > b$. From Proposition \ref{prop SymbolicColonInterEquality} , we know that $(J^{(a)})^{[p]}:J^{(b)} = \bigcap_{i = 1}^m ((\p_i^{(a)})^{[p]}:\p_i^{(b)})$, where $\p_1, \dots, \p_m$ are the minimal primes of $J$, where each $\p_i \neq 0$ since $J \neq 0$. Lemma \ref{lemma PrimarySymbolicColon} implies that $(\p_i^{(a)})^{[p]}:\p_i^{(b)}$ is $\p_i$-primary for every $i$.

Thus, $\bigcap_{i = 1}^m ((\p_i^{(a)})^{[p]}:\p_i^{(b)})$ is a minimal primary decomposition for $(J^{(a)})^{[p]}:J^{(b)}$. This decomposition has the same associated primes than the associated primes of $J$.
\end{proof}

\begin{proposition}\label{prop ColonIdealContainment}
Let $J$ be a radical ideal in a regular domain of prime characteristic $p$. Then
$$ (J^{(a+1)})^{[p]}:J^{(b+p)} \subseteq (J^{(a)})^{[p]}:J^{(b)},$$
for every $a,b \geq 1$.
\end{proposition}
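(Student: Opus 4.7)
The plan is to chain together three ingredients in the regular domain $R$ of characteristic $p$: first, the easy containment $J^{(b+p)} \supseteq J^{(b)} \cdot J^{[p]}$, coming from $J^{[p]} \subseteq J^p \subseteq J^{(p)}$ combined with the product property $J^{(b)} J^{(p)} \subseteq J^{(b+p)}$; second, the Frobenius--colon identity $I^{[p]} : K^{[p]} = (I:K)^{[p]}$ for finitely generated $K$, valid in a regular ring by Kunz's theorem; and third, the symbolic--colon identity $J^{(a+1)} : J = J^{(a)}$ for a radical ideal $J$.

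With these in hand, the proof is a short colon chase. The first ingredient gives $(J^{(a+1)})^{[p]}:J^{(b+p)} \subseteq (J^{(a+1)})^{[p]}:(J^{(b)} J^{[p]})$; the standard identity $I:(KL) = (I:K):L$ rearranges the right side to $((J^{(a+1)})^{[p]}:J^{[p]}):J^{(b)}$; the second ingredient turns the inner colon into $(J^{(a+1)}:J)^{[p]}$; and the third collapses $J^{(a+1)}:J$ to $J^{(a)}$, producing $(J^{(a)})^{[p]}:J^{(b)}$, as required.

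What remains is to justify the two nontrivial ingredients. For the Frobenius--colon identity, I would apply the exact Frobenius functor (exactness being Kunz's theorem, equivalent to regularity of $R$) to the canonical injection $R/(I:K) \hookrightarrow (R/I)^{\oplus n}$ sending the class of $r$ to $(rk_1+I, \ldots, rk_n+I)$, where $K = (k_1, \ldots, k_n)$; computing the resulting kernel yields $(I:K)^{[p]} = I^{[p]}:K^{[p]}$. For the symbolic--colon identity, $\supseteq$ is immediate from $J^{(a)} \cdot J \subseteq J^{(a+1)}$; for $\subseteq$, suppose $xJ \subseteq J^{(a+1)}$ and localize at any minimal prime $\p_i$ of $J$, using incomparability of minimal primes (so $\p_j R_{\p_i} = R_{\p_i}$ for $j \neq i$) to reduce to the regular local identity $\m^{a+1}:\m = \m^a$, which holds because the associated graded ring of $R_{\p_i}$ is a polynomial ring and multiplication by a nonzero linear form is injective on it.

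The only real technical obstacle is the Frobenius--colon identity, which depends on Kunz's theorem; once that is in hand, the rest is a bookkeeping colon chase and an elementary localization argument.
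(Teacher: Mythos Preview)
Your proof is correct, and at its core it uses exactly the same three ingredients as the paper: the containment $J^{[p]} \subseteq J^p$, the Frobenius--colon identity $(I:K)^{[p]} = I^{[p]}:K^{[p]}$, and the identity $\m^{a+1}:\m = \m^a$ in a regular local ring. The difference is purely organizational. The paper first reduces to the local situation---checking the containment at each $\p \in \Ass(J)$ (after verifying $\Ass((J^{(a)})^{[p]}:J^{(b)}) \subseteq \Ass(J)$), then completing to a power series ring---and only then runs the colon chase $(f\m^b)\m^{[p]} \subseteq (\m^{a+1})^{[p]}$, hence $f\m^b \subseteq (\m^{a+1}:\m)^{[p]} = (\m^a)^{[p]}$. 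You instead run the colon chase once globally and push the localization into the proofs of the two nontrivial identities. Your version is arguably cleaner: it avoids the completion step and the associated-primes bookkeeping, and it isolates $J^{(a+1)}:J = J^{(a)}$ as a standalone fact of independent interest. The paper's version has the advantage that the Frobenius--colon identity is invoked only in a power series ring, where it is more elementary, rather than via Kunz's theorem in full generality.
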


\begin{proof}
In order to prove the containment we proceed as in Remark \ref{rmrk IdealContainmentAssociatedPrimes}. Thus we show the containment holds when we localize at each of the associated primes of $(J^{(a)})^{[p]}:J^{(b)}$. Since $\Ass((J^{(a)})^{[p]}:J^{(b)}) \subseteq \Ass(J)$, we show that the containment holds at each of the associated primes of $J$. 

Let $\p \in \Ass(J)$. We proceed to prove that 
$$ ((J^{(a+1)})^{[p]}:J^{(b+p)}) R_{\p} \subseteq ((J^{(a)})^{[p]}:J^{(b)}) R_{\p}.$$

Note that the left hand side is equal to $(({\p}R_{\p})^{a+1})^{[p]}:({\p}R_{\p})^{b+p}$, while the right hand side is equal to $(({\p}R_{\p})^{a})^{[p]}:({\p}R_{\p})^{b}$. So we have to prove that 

$$(({\p}R_{\p})^{a+1})^{[p]}:({\p}R_{\p})^{b+p} \subseteq (({\p}R_{\p})^{a})^{[p]}:({\p}R_{\p})^{b}.$$
This containment holds if and only if 
$$(\m^{a+1})^{[p]}:\m^{b+p} \subseteq (\m^{a})^{[p]}:\m^{b},$$
where $\m$ is the maximal ideal of $\widehat{R}_{\p}$, the completion of $R_{\p}$ with respect to ${\p}R_{\p}$.

Let $f \in (\m^{a+1})^{[p]}:\m^{b+p}$. Observe that 
$$(f \m ^{b}) \m^{[p]} \subseteq (f \m^{b}) \m^p = f \m^{b+p} \subseteq (\m^{a+1})^{[p]}.$$ 
Hence, $f \m ^{b} \subseteq (\m^{a+1})^{[p]}:\m^{[p]} = (\m^{a+1}:\m)^{[p]} = (\m^{a})^{[p]}$, because $\widehat{R}_{\p}$ is a power series ring over a field. Thus, $f \in (\m^{a})^{[p]}:\m ^{b}$. We conclude that $(\m^{a+1})^{[p]}:\m^{b+p} \subseteq (\m^{a})^{[p]}:\m^{b}$.
\end{proof}

\begin{corollary}
Let $J$ be a radical ideal in a regular domain of prime characteristic $p$. Then 
$$ (J^{((n+1)+1)})^{[p]}:J^{((n+1)p+1)} \subseteq (J^{(n+1)})^{[p]}:J^{(np+1)}$$
for every $n \geq 0$ and
$$ (J^{(n+1)})^{[p]}:J^{((n+1)p)} \subseteq (J^{(n)})^{[p]}:J^{(np)}$$
for every $n \geq 1$.
\end{corollary}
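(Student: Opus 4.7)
The plan is that both containments in this corollary are essentially immediate specializations of Proposition \ref{prop ColonIdealContainment}, which provides the general template
$$(J^{(a+1)})^{[p]}:J^{(b+p)} \subseteq (J^{(a)})^{[p]}:J^{(b)}$$
valid for all $a, b \geq 1$. Thus no new ideas are required; the work is purely a matter of choosing the right substitutions for $a$ and $b$ so that the exponents match.

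For the first containment, I would set $a = n+1$ and $b = np + 1$. Then $a + 1 = (n+1)+1$ and $b + p = np + 1 + p = (n+1)p + 1$, so the left-hand side of Proposition \ref{prop ColonIdealContainment} becomes $(J^{((n+1)+1)})^{[p]}:J^{((n+1)p+1)}$ and the right-hand side becomes $(J^{(n+1)})^{[p]}:J^{(np+1)}$, which is exactly what is asserted. The hypotheses of the proposition require $a \geq 1$ and $b \geq 1$; since $n \geq 0$ gives $a = n+1 \geq 1$, and $b = np+1 \geq 1$ automatically, both are satisfied.

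For the second containment, I would instead set $a = n$ and $b = np$. Then $a + 1 = n+1$ and $b + p = np + p = (n+1)p$, so Proposition \ref{prop ColonIdealContainment} directly gives $(J^{(n+1)})^{[p]}:J^{((n+1)p)} \subseteq (J^{(n)})^{[p]}:J^{(np)}$. Here the hypothesis $a \geq 1$ forces $n \geq 1$, which is precisely the restriction in the statement; the condition $b = np \geq 1$ is then automatic.

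There is really no obstacle of substance here: the corollary is a bookkeeping consequence of the proposition, and the only thing to verify carefully is that the shifted exponents align and that $n \geq 0$ (respectively $n \geq 1$) is the correct range guaranteeing $a \geq 1$ in each case.
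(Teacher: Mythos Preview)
Your proposal is correct and is exactly the approach the paper intends: the corollary is stated in the paper without proof immediately after Proposition~\ref{prop ColonIdealContainment}, and your two substitutions $(a,b)=(n+1,np+1)$ and $(a,b)=(n,np)$ are precisely the intended specializations, with the range checks on $n$ matching the hypothesis $a,b\geq 1$.
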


As a consequence, we improve a criterion for symbolic F-splitness from De Stefani, Monta\~no and N\'u\~nez-Betancourt to only need to check one colon ideal instead of an intersection of finitely many colon ideals \cite[Theorem 5.8, Proposition 5.12]{destefani2021blowup}. Furthermore, the following lemma gives a sufficient condition for symbolic F-splitness.

\begin{lemma}\label{lemma SymbolicFsplitnessCriteriaOnMinimialPrimes}
Let $J$ be a homogeneous radical ideal in a polynomial ring over a field $k$. Let $J = \q_1 \cap \dots \cap \q_l$, with $h_i = \Ht(\q_i)$. Let $\m$ be the irrelevant ideal. If there is an $f$ such that $f \in \bigcap_i \q_i^{h_i}$ and $f^{p-1} \not\in \m^{[p]}$, then $J$ is symbolic $F$-split.
\end{lemma}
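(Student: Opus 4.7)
The plan is to show that $f^{p-1}$ lies in the colon ideal $(J^{(n+1)})^{[p]} : J^{(np+1)}$ for every $n \geq 0$, and then to invoke the Fedder-style criterion for symbolic $F$-splitness of De Stefani--Monta\~no--N\'u\~nez-Betancourt: an element of this intersection that avoids $\m^{[p]}$ yields a splitting $\phi$ of $R \subseteq R^{1/p}$ with $\phi((J^{(np+1)})^{1/p}) \subseteq J^{(n+1)}$ for every $n$, realising $\{J^{(n)}\}_{n\geq 0}$ as an $F$-split filtration. By Proposition \ref{prop SymbolicColonInterEquality}, for each fixed $n$ the containment $f^{p-1} \in (J^{(n+1)})^{[p]} : J^{(np+1)}$ will follow once I verify that
\[
f^{p-1} \in (\q_i^{(n+1)})^{[p]} : \q_i^{(np+1)}
\]
for every minimal prime $\q_i$. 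Since $(n+1)p > np+1$, Lemma \ref{lemma PrimarySymbolicColon} shows this colon is $\q_i$-primary, so by Remark \ref{rmrk IdealContainmentAssociatedPrimes} the containment can be checked after localizing at $\q_i$.

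The problem thus reduces to the following local claim, which is the technical heart of the argument: in a regular local ring $(S,\n)$ of dimension $d$, if $f \in \n^d$, then
\[
f^{p-1} \cdot \n^{np+1} \subseteq (\n^{n+1})^{[p]} \qquad \text{for every } n \geq 0.
\]
I would prove this by passing to the $\n$-adic completion of $S$, a power series ring over a field, where faithful flatness lets one detect the containment and where every element is a sum of monomials. Every monomial in the expansion of $f^{p-1}$ has total degree at least $d(p-1)$, so any monomial $x^\gamma$ arising in $f^{p-1} g$ with $g \in \n^{np+1}$ satisfies $|\gamma| \geq d(p-1) + np + 1$. The elementary bound $\sum_j \lfloor \gamma_j/p \rfloor \geq (|\gamma|-d(p-1))/p$ then yields $\sum_j \lfloor \gamma_j/p \rfloor > n$, so $x^\gamma$ is divisible by a $p$-th power $x^{p\alpha}$ with $|\alpha| = n+1$, placing $x^\gamma$ in $(\n^{n+1})^{[p]}$.

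This monomial pigeonhole count is the main obstacle; once it is done, assembly takes over. Applying the local claim to each regular local ring $R_{\q_i}$ of dimension $h_i$, together with the fact that $f \in \q_i^{h_i}$ becomes an element of $(\q_i R_{\q_i})^{h_i}$ after localizing, yields the colon containment at every $\q_i$ and every $n \geq 0$. Proposition \ref{prop SymbolicColonInterEquality} then assembles these into $f^{p-1} \in (J^{(n+1)})^{[p]}:J^{(np+1)}$ for all $n \geq 0$, and the hypothesis $f^{p-1} \notin \m^{[p]}$ produces, via the standard Fedder correspondence between elements outside $\m^{[p]}$ and splittings of Frobenius, the splitting required by condition (iv) in the definition of an $F$-split filtration. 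Hence $\{J^{(n)}\}_{n \geq 0}$ is $F$-split and $J$ is symbolic $F$-split.
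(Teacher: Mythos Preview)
Your proposal is correct and follows the same overall architecture as the paper's proof: reduce to showing $f^{p-1} \in (\q_i^{(n+1)})^{[p]}:\q_i^{(np+1)}$ for each minimal prime, assemble these via Proposition~\ref{prop SymbolicColonInterEquality}, and conclude with the Fedder-type criterion \cite[Theorem~5.8]{destefani2021blowup}. The one substantive difference lies in how the prime-by-prime containment is obtained. The paper simply invokes \cite[Corollary~5.10]{destefani2021blowup}, which already gives $\q_i^{(h_i(p-1))} \subseteq (\q_i^{(n+1)})^{[p]}:\q_i^{(np+1)}$, and then notes $f^{p-1} \in \q_i^{h_i(p-1)} \subseteq \q_i^{(h_i(p-1))}$. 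You instead localize at $\q_i$ (using Lemma~\ref{lemma PrimarySymbolicColon} and Remark~\ref{rmrk IdealContainmentAssociatedPrimes} to justify this), complete, and run a direct monomial pigeonhole count establishing $\n^{d(p-1)}\cdot \n^{np+1} \subseteq (\n^{n+1})^{[p]}$ in a $d$-dimensional power series ring --- in effect reproving the cited corollary in the case at hand. Your route is self-contained and more elementary; the paper's is shorter by outsourcing this step to the literature.
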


\begin{proof}
Suppose there is an $f$ such that $f \in \bigcap_i \q_i^{h_i}$ and $f^{p-1} \not\in \m^{[p]}$.

From previous work \cite[Corollary 5.10]{destefani2021blowup}, we have that $\q_i^{(h_i(p-1))}  \subseteq (\q_i^{(n+1)})^{[p]}:\q_i^{(np+1)}$ for every $n \geq 0$. Since $f \in \q_i^{h_i}$, we have that $f^{p-1} \in \q_i^{h_i(p-1)}  \subseteq \q_i^{(h_i(p-1))} \subseteq (\q_i^{(n+1)})^{[p]}:\q_i^{(np+1)}$, for every minimal prime $\q_i$ and for every $n \geq 0$. Proposition \ref{prop SymbolicColonInterEquality} implies that $f^{p-1} \in (J^{(n+1)})^{[p]}:J^{(np+1)}$ for every $n \geq 0$. Since $f^{p-1} \not \in \m^{[p]}$, where $\m$ is the irrelevant ideal,  we conclude that $J$ is symbolic F-split \cite[Theorem 5.8]{destefani2021blowup}.
\end{proof}

\section{Symbolic F-Splitness of Binomial Edge Ideals}\label{sctn symbolicFsplitnessOfSomeBEI}

Throughout this section, we adopt the following setting. We denote by $k$ an $F$-finite field of prime characteristic $p$. $G$ denotes a simple connected graph on the vertex set $[d]$. We denote by $R$ the polynomial ring $k[x_1, \dots, x_d, y_1, \dots, y_d]$ and by $J_G$ the binomial edge ideal associated to the graph $G$.

Now we show that some families of binomial edge ideals are symbolic F-split. Namely the binomial edge ideals associated to complete multipartite graphs and the binomial edge ideals associated to caterpillar graphs. 

\begin{definition}
A multipartite graph $G$ is a graph such that the vertex set $V$ has a partition of non empty subsets $V_1, \dots, V_l$ such that for every $i$, if $u, v \in V_i$, then $\{u,v\}$ is not an edge of $G$. Furthermore, if for every $i,j$ such that $i \neq j$ we have that $\{u,v\}$ is an edge of $G$ for every $u \in V_i, v \in V_j$, we say that $G$ is multipartite complete. The sets $V_i$ are called the $i$-th part of $G$.
\end{definition}

\begin{figure}[hbt!] \centering \begin{tikzpicture} \newcommand*\pointspxMvw{553.5996924298968/357.702050425149/0/1,838.2817343756791/358.71074124792517/1/4,837.7751100998131/455.9631529693903/2/5,698.8658339028367/614.3752527349557/3/6,554.8930695342559/405.88665532240543/4/2,555.536407222115/453.3146644873013/5/3}
          \newcommand*\edgespxMvw{1/0,2/0,3/0,3/1,3/2,4/1,4/2,4/3,5/1,5/2,5/3}
          \newcommand*\scalepxMvw{0.02}
          \foreach \x/\y/\z/\w in \pointspxMvw {
          \node (\z) at (\scalepxMvw*\x,-\scalepxMvw*\y) [circle,draw,inner sep=3pt] {$\w$};
          }
\foreach \x/\y in \edgespxMvw {
          \draw (\x) -- (\y);
          }
       \end{tikzpicture} \caption[Caption for LOF]{A complete multipartite graph. Note that $V_1 = \{1,2,3\}$, $V_2 = \{4,5\}$ and $V_3 = \{6\}$.\protect\footnotemark} \label{fig:M1} \end{figure}
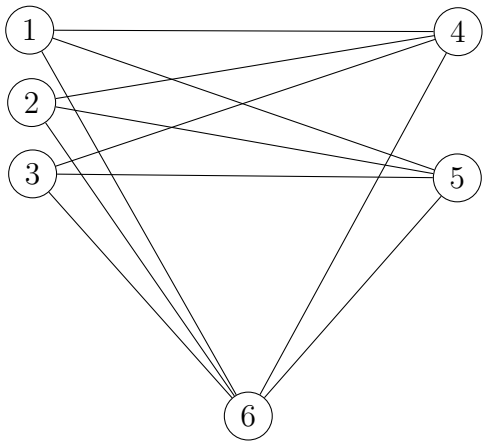

\footnotetext{This image was generated using the package Visualize \cite{VisualizeSource} in Macaulay2 \cite{M2}.}

\begin{theorem}
Let $G$ be a complete multipartite graph. Then $J_G$ is symbolic F-split.
\end{theorem}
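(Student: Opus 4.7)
The plan is to apply Lemma \ref{lemma SymbolicFsplitnessCriteriaOnMinimialPrimes}: it suffices to pin down the minimal primes of $J_G$ and their heights, and then to exhibit an element $f \in R$ lying in each minimal prime to the power of its height and satisfying $f^{p-1} \notin \m^{[p]}$.

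First I would describe the minimal primes via Proposition \ref{prop CriteriaForMinPrimesBEI}. Let $V_1, \dots, V_l$ be the parts of $G$ and set $n_i = |V_i|$. I claim the cut sets of $G$ are exactly $\emptyset$ and $[d]\setminus V_i$ for those $i$ with $n_i \geq 2$. Indeed, if $[d]\setminus S$ meets two distinct parts then $G_{[d]\setminus S}$ is itself a connected complete multipartite graph, so $c(S) = 1$ and only $S = \emptyset$ qualifies. Otherwise $[d]\setminus S \subseteq V_i$ for some $i$; any $s \in V_i \cap S$ would only add one more isolated vertex upon removal, so $c(S\setminus\{s\}) > c(S)$ and the minimality criterion fails. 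Hence $V_i \cap S = \emptyset$, i.e.\ $S = [d]\setminus V_i$, which is a cut set iff $n_i \geq 2$. The minimal primes are therefore $\p_\emptyset = I_2(X)$ of height $d-1$ and $\p_{[d]\setminus V_i} = (x_j, y_j : j \notin V_i)$ of height $2(d - n_i)$ for each $i$ with $n_i \geq 2$.

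Next I would construct $f$ as a product $f = \prod_{e \in H} f_e$ of $2 \times 2$ minors $f_{a,b} = x_a y_b - x_b y_a \in I_2(X)$, indexed by a multiset $H$ of unordered pairs in $[d]$. The containment conditions translate into $|H| \geq d-1$ (so that $f \in I_2(X)^{d-1}$) and, for each $i$ with $n_i \geq 2$, a lower bound $2(d - n_i)$ on the degree of $f$ in the variables outside $V_i$. The requirement $f^{p-1} \notin \m^{[p]}$ in addition forces every vertex of $[d]$ to be incident to at most two edges of $H$, since each factor $f_e^{p-1}$ contributes $p - 1$ to the combined $\{x_v, y_v\}$-degree for every endpoint $v$ of $e$. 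A natural candidate balancing these constraints is a $2$-regular spanning multigraph on $[d]$ whose edges all cross between distinct parts of $G$: such $H$ has $|H| = d$, each edge contributes $1$ to the degree in each part it meets, and the bound on every $\p_{[d]\setminus V_i}$ is then met with equality.

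The hard step will be verifying $f^{p-1} \notin \m^{[p]}$. After expanding $\prod_e f_e^{p-1}$ binomially, imposing that every variable exponent in the resulting monomial be at most $p - 1$ pins down the parameters $(s_e)$ to a one-parameter family indexed by $a \in \{0, \dots, p-1\}$, and the coefficient of the distinguished monomial $\prod_v (x_v y_v)^{p-1}$ becomes a sum $\sum_{a=0}^{p-1} \pm \prod_e \binom{p-1}{s_e(a)} \pmod{p}$. The concern is that for overly symmetric $H$ (for instance the $4$-cycle inside $K_{2,2}$) the congruence $\binom{p-1}{a} \equiv (-1)^a \pmod{p}$ collapses this sum to $\sum_{a=0}^{p-1} 1 \equiv 0 \pmod{p}$, so the proof will need a more carefully chosen $H$, perhaps mixing cycle lengths, allowing doubled edges, or otherwise exploiting the multipartite structure to break the sign pattern. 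Once the coefficient is shown to be a unit in $\FF_p$, Lemma \ref{lemma SymbolicFsplitnessCriteriaOnMinimialPrimes} concludes that $J_G$ is symbolic $F$-split.
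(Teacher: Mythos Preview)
Your overall strategy via Lemma~\ref{lemma SymbolicFsplitnessCriteriaOnMinimialPrimes} and your determination of the minimal primes match the paper. The gap is in the choice of $f$, and it is genuine. First, insisting that every edge of $H$ cross between parts can make the $2$-regular spanning multigraph impossible: for $K_{1,d-1}$ with $d\ge 3$ every crossing edge meets the single vertex of the small part, so that vertex would have degree $|H|$, forcing $|H|=2$ while the remaining $d-1$ vertices cannot all have degree $2$. (In fact the crossing hypothesis is unnecessary, since for \emph{any} $2$-regular $H$ one has $\prod_{e\in H}f_e\in\p_{[d]\setminus V_i}^{\,\sum_{v\notin V_i}\deg_H(v)}=\p_{[d]\setminus V_i}^{2(d-n_i)}$.) More seriously, even when a suitable $H$ exists you leave $f^{p-1}\notin\m^{[p]}$ open, and the obstruction you flag is real: already for $d=2$ the doubled edge gives $f=f_{1,2}^2$, and $f^{p-1}=f_{1,2}^{2(p-1)}\in\m^{[p]}$ because $\binom{2p-2}{p-1}\equiv 0\pmod p$. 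No amount of ``mixing cycle lengths'' will help uniformly in $p$ without a new idea.

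The paper sidesteps the whole issue by taking
\[
f=y_1\,f_{1,2}f_{2,3}\cdots f_{d-1,d}\,x_d,
\]
a path of minors capped with the linear factors $y_1$ and $x_d$. The monomial $y_1(x_1y_2)(x_2y_3)\cdots(x_{d-1}y_d)x_d=\prod_i x_iy_i$ occurs in $f$ with coefficient $1$, so $f^{p-1}\notin\m^{[p]}$ is immediate---no coefficient computation is needed. The same linear factors are then used in checking $f\in\p_S^{\Ht(\p_S)}$ for $S=\{s_1<\cdots<s_m\}\neq\emptyset$: for each $s\in S$ one has $f_{s-1,s},f_{s,s+1}\in\p_S$ (with $y_1$ replacing $f_{s_1-1,s_1}$ when $s_1=1$ and $x_d$ replacing $f_{s_m,s_m+1}$ when $s_m=d$), and these $2m$ factors already place a divisor of $f$ in $\p_S^{2m}=\p_S^{\Ht(\p_S)}$. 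Thus the key idea you are missing is to allow single variables $x_j,y_j$ as factors of $f$ alongside the minors; this both guarantees the Fedder-type condition for free and supplies the extra $\p_S$-order that a pure product of minors along a path would lack at the endpoints.
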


\begin{proof}
Let $V_1, \dots, V_l$ be the parts of $G$. From previous work by Ohtani \cite[Lemma 2.2]{Ohtani}, we know that the minimal primes of $J_G$ are the ideals of the form $\p_S$ where $S = \emptyset$ or $S \neq \emptyset$ and $G \setminus S$ is formed by all the vertices of one of the parts of $G$. 

Let $f = y_1 f_{1,2} f_{2,3} \dots f_{d-1,d} x_d$. Let $\p_S$ be a minimal prime of $J_G$ and let $h = \Ht(\p_S)$. We now prove that $f \in \p_S^{h}$ for each minimal prime $\p_S$ of $J_G$. We consider the cases $S = \emptyset$ and $S \neq \emptyset$.

Suppose $S = \emptyset$.  Then $f_{i,i+1} \in \p_S$ for every $i \in \{1, \dots, d - 1\}$. Hence, $f \in \p_S^{d - 1}$. Since $\Ht(\p_S) = |S| + d - c(S) = 0 + d - 1 = d - 1$, we conclude that $f \in \p_S^{h}$. 

Suppose $S \neq \emptyset$. Let $S = \{s_1, s_2, \dots, s_m \}$, where $s_1 < s_2 < \dots < s_m$. For each $i \in \{1, 2, \dots, m-1 \}$, we define the following elements

$$g_0 = \twopartdef { y_1 } {s_1 = 1} {f_{s_1 - 1, s_1}} {s_1 > 1,}$$

$$g_i = \twopartdef { f_{s_i, s_i + 1} } {s_i + 1 = s_{i+1}} {f_{s_i, s_i + 1}f_{s_{i+1} - 1, s_{i+1}}} {s_i + 1 < s_{i+1},}$$
and
$$g_m = \twopartdef { x_d } {s_m = d} {f_{s_m,s_m + 1}} {s_m < d.}$$

Note that if $i \neq j$, then $g_i \neq g_j$. Let $g = g_0 g_1 \dots g_m$. We show that $g \in \p_S^{h}$. We proceed by cases.

First, consider $g_0$. If $s_1 = 1$, then $y_1 \in \p_S$, and if $s_1 > 1$, then $f_{s_1 - 1, s_1} \in \p_S$. Thus, $g_0 \in \p_S$.

Now, consider $g_i$ with $i \in \{1, 2, \dots, m-1 \}$. If $s_i + 1 = s_{i+1}$, then $f_{s_i,s_i + 1} \in  \p_S^2$. If $s_i + 1 < s_{i+1}$, then $f_{s_i, s_i + 1} \neq f_{s_{i+1} - 1, s_{i+1}}$ and  $f_{s_i, s_i + 1}, f_{s_{i+1} - 1, s_{i+1}} \in \p_S$. Thus $f_{s_i, s_i + 1}f_{s_{i+1} - 1, s_{i+1}} \in \p_S^2$, and so $g_i \in \p_S^2$.

Lastly, consider $g_m$. If $s_m = d$, then $x_d \in \p_S$, and if $s_m < d$, then $f_{s_m,s_m + 1} \in \p_S$. Thus, $g_m \in \p_S$. 

This implies that $g = g_0 ( \prod_{i = 1}^{m-1}g_i ) g_m \in \p_S((\p_S^2)^{m-1})\p_S = \p_S^{2m}$. By construction $g$ divides $f$, and hence $f \in \p_S^{2m}$. Recall that $h = \Ht(\p_S) = |S| + d - c(S) = m + d - (d - m)= 2m$. Thus, $f \in \p_S^h$. We conclude that $f \in \p_S^h$ for each minimal prime $\p_S$ of $J_G$.

Since $f^{p-1} \not \in \m^{[p]}$, where $\m$ is the irrelevant ideal, Lemma \ref{lemma SymbolicFsplitnessCriteriaOnMinimialPrimes} implies that $J_G$ is symbolic F-split.
\end{proof}

Othani proved that the symbolic powers and ordinary powers of binomial edge ideals of complete multipartite graphs are the same \cite{Ohtani}, hence the powers of $J_G$ form an F-split filtration. This implies that the Rees algebra $\cR(J_G)$ and the associated graded ring $\gr(J_G)$ are F-split \cite[Theorem 4.7]{destefani2021blowup}.

\begin{corollary}
Let $G$ be a complete multipartite graph. Then $\cR(J_G)$ and $\gr(J_G)$ are $F$-split.
\end{corollary}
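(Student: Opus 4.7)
The plan is a direct assembly of two facts already in place. First, I would invoke Ohtani's theorem, cited in the paragraph immediately preceding the statement, which gives $J_G^{(n)} = J_G^n$ for every $n \geq 0$ whenever $G$ is a complete multipartite graph. Second, I would invoke the theorem proved just above, which says that $J_G$ is symbolic $F$-split, i.e.\ that the family $\{J_G^{(n)}\}_{n \geq 0}$ forms an $F$-split filtration in the sense of the definition from Section~\ref{sctn criterionsSymbolicFsplitness}.

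From these two ingredients, the sequence of ordinary powers $\{J_G^n\}_{n \geq 0}$ \emph{is} the symbolic filtration $\{J_G^{(n)}\}_{n \geq 0}$, so conditions (i)--(iv) of the definition of an $F$-split filtration transfer verbatim: $J_G^0 = R$, the inclusions $J_G^{n+1} \subseteq J_G^n$ and $J_G^n J_G^m \subseteq J_G^{n+m}$ are automatic for ordinary powers, and any splitting $\phi : R^{1/p} \to R$ witnessing the symbolic $F$-splitness of $J_G$ satisfies $\phi((J_G^{np+1})^{1/p}) \subseteq J_G^{n+1}$ by virtue of the equality $J_G^{(m)} = J_G^m$ applied at $m = np+1$ and $m = n+1$.

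With the ordinary powers of $J_G$ forming an $F$-split filtration, I would then quote \cite[Theorem 4.7]{destefani2021blowup}, which states precisely that if the ordinary powers of an ideal $I$ form an $F$-split filtration, then both $\cR(I)$ and $\gr(I)$ are $F$-split. Applying this with $I = J_G$ yields the conclusion.

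There is essentially no obstacle: the only subtlety worth emphasizing in the write-up is the bookkeeping that the splitting furnished by the symbolic $F$-splitness of $J_G$ automatically serves as a splitting for the ordinary filtration, which is immediate from the equality of symbolic and ordinary powers in this setting. Everything else is a citation.
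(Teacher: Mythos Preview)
Your proposal is correct and follows exactly the paper's argument: use Ohtani's result to identify the symbolic and ordinary filtrations, transfer the $F$-split filtration property established in the preceding theorem, and then cite \cite[Theorem 4.7]{destefani2021blowup}. The only quibble is that the definition of an $F$-split filtration lives in Section~\ref{sctn background}, not Section~\ref{sctn criterionsSymbolicFsplitness}, but this is cosmetic.
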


\begin{definition}\label{dfn CaterpillarGraph}
A caterpillar graph is a graph $G$ such that it has an induced path graph $P$ on the vertices $v_1, \dots, v_l$ of $G$, where $\{v_i, v_{i+1}\}$ is an edge for $i = 1, 2, \dots, l-1$, and for any other vertex $u$ of $G$ such that $u$ is not a vertex of $P$, we have that there is a unique $v$ with $v \in V(P) \setminus \{v_1, v_l \}$ such that $\{u,v\} \in E(G)$.
\end{definition}

\begin{remark}\label{rmrk CaterpillarGraph}
Observe that for a caterpillar graph $G$ on $[d]$, we can label its vertices in the following way. Let $\{v_1, \dots, v_l\}$ be as in Definition \ref{dfn CaterpillarGraph} and let $X_i$ be the set of vertices of $G \setminus P$ connected uniquely to $v_i$. Let $a_i = |X_i|$. Note that $v_1$ and $v_l$ are uniquely connected to $v_2$ and $v_{l-1}$ respectively in $G$, and thus, $a_1 = a_l = 0$.
First, let $v_1 = 1$.
For $i \geq 1 $, let $v_{i + 1} = v_{i} + a_{i} + 1$. Lastly, label the vertices in $X_i$ by $v_i + 1, v_i + 2, \dots, v_i + a_i$, for $1 < i < l$. 
As a consequence of this labeling, we have that $v_l = d$.
\end{remark}

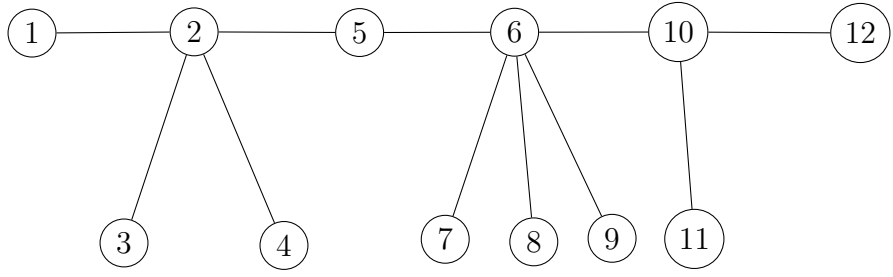
\begin{figure}[hbt!] \centering \begin{tikzpicture}
         \newcommand*\pointsKhSZJ{516.0312250524278/384.9307020943521/0/1,623.7694974631671/384.11792825402614/1/2,577.1709816619907/524.5179866084259/2/3,683.4824779389137/526.1782204630771/3/4,733.7622433615339/384.5747437548659/4/5,836.7616521800276/384.35891420199357/5/6,790.640783666441/522.0649380016152/6/7,849.4943775286632/523.7763824132367/7/8,901.5338515548419/521.4181809026433/8/9,945.5194394783839/384.20934250894067/9/10,956.1948827821664/521.8201938209186/10/11,1066.5098925393693/384.86675056207616/11/12}
          \newcommand*\edgesKhSZJ{1/0,2/1,3/1,4/1,5/4,6/5,7/5,8/5,9/5,10/9,11/9}
          \newcommand*\scaleKhSZJ{0.02}
          \foreach \x/\y/\z/\w in \pointsKhSZJ {
          \node (\z) at (\scaleKhSZJ*\x,-\scaleKhSZJ*\y) [circle,draw,inner sep=3pt] {$\w$};
          }
\foreach \x/\y in \edgesKhSZJ {
          \draw (\x) -- (\y);
          }
      \end{tikzpicture}\caption[Caption for LOF]{A caterpillar graph which is labeled in the way stated in Remark \ref{rmrk CaterpillarGraph}.\protect\footnotemark} \label{fig:M2} \end{figure}

\footnotetext{This image was generated using the package Visualize \cite{VisualizeSource} in Macaulay2 \cite{M2}.}
    
\begin{theorem}
Let $G$ be a caterpillar graph. Then $J_G$ is symbolic F-split.
\end{theorem}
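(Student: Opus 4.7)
The plan is to imitate the proof for complete multipartite graphs: I would exhibit a single element $f \in R$ satisfying $f \in \p_S^{\height(\p_S)}$ for every minimal prime $\p_S$ of $J_G$ together with $f^{p-1} \notin \m^{[p]}$, and then invoke Lemma \ref{lemma SymbolicFsplitnessCriteriaOnMinimialPrimes}. Using the labeling of Remark \ref{rmrk CaterpillarGraph}, I propose to take exactly the same element as in the multipartite case,
$$ f \;=\; y_1 \cdot f_{1,2}\cdot f_{2,3}\cdots f_{d-1,d}\cdot x_d. $$
Even though $\{k,k+1\}$ need not be an edge of the caterpillar, each $f_{k,k+1}$ is a $2\times 2$ minor of the generic $2\times d$ matrix, hence lies in $\p_\emptyset = I_2(X)$, and $f$ has degree exactly $2$ in each pair of variables $(x_v, y_v)$.

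First I would classify the cut sets of $G$. Since every leaf of $G$ and the two path endpoints $v_1, v_l$ have degree one, they cannot belong to any $S$ with $\p_S$ minimal; hence $S \subseteq \{v_2,\ldots,v_{l-1}\}$. The minimality criterion of Proposition \ref{prop CriteriaForMinPrimesBEI} translates, for each $v_i \in S$, to the numerical condition $a_i + [v_{i-1}\in T] + [v_{i+1}\in T] \geq 2$; the crucial consequence I would extract is that if both $v_i$ and $v_{i+1}$ lie in $S$, then $a_i \geq 1$. A short count then gives $c(S) = (r+1) + \sum_{v_i \in S} a_i$, where $r$ denotes the number of maximal runs of $S$ along the path $v_1,\ldots,v_l$, so $\height(\p_S) = d + |S| - r - 1 - \sum_{v_i \in S} a_i$.

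Next I would count the factors of $f$ that lie in $\p_S$. Since $1,d\notin S$, the factors $y_1$ and $x_d$ do not contribute. For each $1\leq k\leq d-1$ a short case analysis shows $f_{k,k+1}\in \p_S$ unless one of two configurations occurs: $(B)$ $k=v_i+j$ and $k+1=v_i+j+1$ are two consecutive leaves of some $v_i\in S$ with $1\leq j<a_i$; or $(C)$ $k=v_i+a_i$, $k+1=v_{i+1}$ with $v_i\in S$ and $v_{i+1}\notin S$. Summing the failures, a little reorganization shows the number of failing minors equals $\sum_{v_i \in S} a_i - |S| + r + |\{v_i\in S : a_i = 0,\ v_{i+1}\in S\}|$. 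The cut-set implication proved in the previous paragraph makes the last cardinality zero, so the number of consecutive minors of $f$ lying in $\p_S$ is exactly $(d-1) - \sum_{v_i \in S}a_i + |S| - r = \height(\p_S)$, which gives $f \in \p_S^{\height(\p_S)}$ for every minimal prime $\p_S$ of $J_G$.

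Finally, because $f$ has the same shape as in the complete multipartite case, expanding $f^{p-1}$ via the multinomial theorem produces the monomial $\prod_{v=1}^d x_v^{p-1}y_v^{p-1}$ with coefficient $\pm 1$: choosing the term $(x_k y_{k+1})^{p-1}$ in each $f_{k,k+1}^{p-1}$ is forced uniquely by the exponent constraints at the endpoints $v=1$ and $v=d$ and then propagated inward. Hence $f^{p-1}\notin \m^{[p]}$, and Lemma \ref{lemma SymbolicFsplitnessCriteriaOnMinimialPrimes} yields that $J_G$ is symbolic $F$-split. I expect the main obstacle to be the case-by-case count in the third paragraph, in particular the reorganization that lets the cut-set implication $v_i, v_{i+1}\in S \Rightarrow a_i \geq 1$ absorb exactly the shortfall between the number of consecutive minors in $\p_S$ and $\height(\p_S)$.
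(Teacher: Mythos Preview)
Your proposal is correct and follows the same overall strategy as the paper: both use $f = y_1 f_{1,2}\cdots f_{d-1,d}\, x_d$, verify $f \in \p_S^{\height(\p_S)}$ for every minimal prime $\p_S$, and then invoke Lemma~\ref{lemma SymbolicFsplitnessCriteriaOnMinimialPrimes}. The bookkeeping is organised differently, however. The paper writes down an explicit divisor $g$ of $f$ assembled from pieces $g_0,\dots,g_m$ together with the factors $\prod_i f_{s_i-1,s_i}f_{s_i,s_i+1}$, computes $c(S)$ via the auxiliary quantities $l_i$ and $\delta_i$, and checks that $g\in\p_S^h$. You instead count directly which consecutive minors $f_{k,k+1}$ fail to lie in $\p_S$, classify the failures into your types (B) and (C), and match the surviving count against the closed formula $h = d + |S| - r - 1 - \sum_{v_i\in S} a_i$. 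Your explicit isolation of the cut-set consequence $v_i,v_{i+1}\in S \Rightarrow a_i \geq 1$ is exactly the hinge that makes both arguments close: in the paper's language it is what prevents the factors $f_{s_i,s_i+1}$ and $f_{s_{i+1}-1,s_{i+1}}$ from coinciding (which would happen if $s_{i+1}=s_i+1$), so that the paper's $g$ genuinely divides $f$. Either organisation works, and the final step $f^{p-1}\notin\m^{[p]}$ is the standard one in both.
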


\begin{proof}
Let $G$ be a caterpillar graph on $[d]$. Consider its path graph $P = (V(P),E(P))$ given as in Definition \ref{dfn CaterpillarGraph} and label the vertices of $G$ as stated in Remark \ref{rmrk CaterpillarGraph}. Let $X_i$ and $a_i$ be as in Remark \ref{rmrk CaterpillarGraph}.

If $\p_S$ is a minimal prime of $J_G$, then $S \subseteq V(P)\setminus \{1,d\}$ \cite[Theorem 20]{SharifanJavanbakht}.

Let $f = y_1 f_{1,2} f_{2,3} \dots f_{d-1,d} x_d$. Let $\p_S$ be a minimal prime of $J_G$ and let $h = \Ht(\p_S)$. Now we prove that $f \in \p_S^{h}$ for each $S$ such that $S \subseteq V(P)\setminus \{1,d\}$. We consider the cases $S = \emptyset$ and $S \neq \emptyset$.

Suppose $S = \emptyset$. Then $f_{i,i+1} \in \p_S$ for every $i \in [d-1]$. Hence, $f \in \p_S^{d - 1}$. Since $\Ht(\p_S) = |S| + d - c(S) = 0 + d - 1 = d - 1$, we conclude that $f \in \p_S^{h}$. 

Suppose $S \neq \emptyset$. Let $S = \{s_1, s_2, \dots, s_m\}$ such that $1 < s_1 < s_2 < \dots < s_m < d$.

Observe that 
$$\p_S = (x_s, y_s \:|\: s \in S) + I_2(X_{[1, s_1 - 1]}) + \sum_{i = 1}^{m-1} I_2(X_{[s_i + a_{s_i} + 1, s_{i+1} - 1]}) + I_2(X_{[s_m + a_{s_m} + 1, d]}),$$

where for any integers $i, j$, $I_2(X_{[i,j]})$ is the ideal of $R$ generated by all $2 \times 2$ minors of the matrix $X_{[i,j]} = 
\begin{bmatrix}
x_i & x_{i + 1} &  \dots & x_j\\
y_i & y_{i + 1} &  \dots & y_j
\end{bmatrix}$ 
whenever $i < j$, and is the zero ideal of $R$ otherwise.

Recall that $s_1 \geq 2$. Let 
$$g_0 = \twopartdefo
{\prod_{j = 1}^{s_1 - 2} f_{j, j + 1}} {s_1 \geq 3}
{1} $$

For $i \in [m-1]$, we define $t_i = s_i + a_{s_i} + 1$ and $l_i = s_{i+1} - (s_i + a_{s_i} + 1) = s_{i+1} - t_i$. Note that $l_i \geq 0$ by construction.

Let
$$g_i = \twopartdefo
{\prod_{j = 1}^{l_i - 1} f_{t_i + j - 1, t_i + j}} {l_i \geq 2}
{1} $$

We know that $d -  (s_m + a_{s_m} + 1) \geq 0$.
Let 
$$g_m = \twopartdefo
{\prod_{j = 1}^{d - (s_m + a_{s_m} + 1)} f_{s_m + a_{s_m} + j, s_m + a_{s_m} + j + 1}} {d -  (s_m + a_{s_m} + 1) \geq 1}
{1} $$

Let
$$ g = 
g_0
(\prod_{i = 1}^{m - 1} g_i)
g_m
\prod_{i = 1}^{m } f_{s_i - 1, s_i} f_{s_i, s_i + 1}. 
$$

We now show that $g \in \p_S^h$. First, we compute $h$. Since $h = \Ht(\p_S) = |S| + d - c(S)$, we need to compute $c(S)$ first. In order to do this, let
$$ Y_i = \threepartdef 
{{[d]} \setminus \{s_1, s_1 + 1, \dots, d\}} {i = 0} 
{{[d]} \setminus (\{1,2,\dots,s_i + a_{s_i}\} \cup \{s_{i+1}, s_{i+1} + 1, \dots, d \} )} {i \in [m-1]}
{{[d]} \setminus \{1, 2, \dots, s_m + a_{s_m}\}  } {i = m.} 
$$

Note that the $Y_j's$ are disjoint by construction and that 
$$ |Y_i| = \threepartdef 
{s_1 - 1} {i = 0}
{l_i} {i \in [m-1]}
{d - (s_m + a_{s_m})} {i = m.} 
$$

Observe that $Y_0$ and $Y_m$ are connected components of $G \setminus S$. Similarly, the singletons formed by the members of each $X_{s_i}$ are connected components of $G \setminus S$. Lastly, for $i \in [m-1]$, we have that $Y_i$ is a connected component whenever $Y_i$ it is not the empty set, that is, whenever $|Y_i| = l_i \geq 1$. For $i \in [m-1]$, we define
$$ \delta_i = \twopartdefo
{1} {l_i \geq 1}
{0} 
$$
Thus, $c(S) = 2 + \sum_{i = 1}^m a_{s_i} + \sum_{i = 1}^{m - 1} \delta_i$ and so,
$h = m + d - (2 + \sum_{i = 1}^m a_{s_i} + \sum_{i = 1}^{m - 1} \delta_i)$.

Observe that for $i \in [m - 1]$ we have that

$$g_i \in \threepartdef
{\p_S^{l_i - 1}} {l_i \geq 2}
{\p_S^0 = \p_S^{l_i - 1}} {l_i = 1}
{\p_S^0 = \p_S^{l_i}} {l_i = 0.}
$$

Thus, we have for $i \in [m-1]$ that

$$g_i \in \twopartdefo 
{\p_S^{l_i - 1}} {l_i \geq 1}
{\p_S^{l_i}} 
$$

This implies that $g_i \in \p_S^{l_i - \delta_i}$, for $i \in [m-1]$.

Observe that $g_0 \in \p_S^{s_1 - 2}$ and that $g_m \in \p_S^{d - (s_m + a_{s_m}+1)}$. Thus, $g \in \p_S^b$ where 
\begin{eqnarray*}
b 
& = & (s_1 - 2) + \sum_{i = 1}^{m - 1} (l_i - \delta_i) + (d - (s_m + a_{s_m} + 1)) + 2m\\
& = & (s_1 - 2) + \sum_{i = 1}^{m - 1} l_i + (d - (s_m + a_{s_m} + 1)) + 2m - \sum_{i = 1}^{m - 1} \delta_i \\
& = & (s_1 - 2) + \sum_{i = 1}^{m - 1} (s_{i+1} - (s_i + a_{s_i} + 1)) + (d - (s_m + a_{s_m} + 1)) + 2m - \sum_{i = 1}^{m - 1} \delta_i \\
& = & -2 - \sum_{i = 1}^m (a_{s_i} + 1) + d + 2m - \sum_{i = 1}^{m - 1} \delta_i \\
& = & -2 - \sum_{i = 1}^m a_{s_i} + d + m - \sum_{i = 1}^{m - 1} \delta_i \\
& = & m + d - (2 + \sum_{i = 1}^m a_{s_i} + \sum_{i = 1}^{m - 1} \delta_i) \\
& = & h.
\end{eqnarray*}

Therefore $b = h$ and $g \in \p_S^h$. This implies that $f \in \p_S^{h}$ since $g$ divides $f$. 

Since $f^{p-1} \not \in \m^{[p]}$, where $\m$ is the irrelevant ideal, Lemma \ref{lemma SymbolicFsplitnessCriteriaOnMinimialPrimes} implies that $J_G$ is symbolic F-split.
\end{proof}

Jahani, Bayati and Rahmati proved that the symbolic powers and ordinary powers of binomial edge ideals of complete caterpillar graphs are the same \cite{MR4520291}, hence the powers of $J_G$ form an F-split filtration. This implies that the Rees algebra $\cR(J_G)$ and the associated graded ring $\gr(J_G)$ are F-split \cite[Theorem 4.7]{destefani2021blowup}.

\begin{corollary}
Let $G$ be a caterpillar tree. Then $\cR(J_G)$  and $\gr(J_G)$ are $F$-split.
\end{corollary}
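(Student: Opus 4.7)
The plan is to assemble two ingredients that are already in hand by the time we reach the corollary: the symbolic $F$-splitness of $J_G$ just established, and the equality between ordinary and symbolic powers of $J_G$ for caterpillar graphs. First, the preceding theorem gives that $\{J_G^{(n)}\}_{n\geq 0}$ is an $F$-split filtration of $R$ in the sense of the definition from Section \ref{sctn background}, so in particular there is a splitting $\phi\colon R^{1/p}\to R$ of $R\subseteq R^{1/p}$ such that $\phi((J_G^{(np+1)})^{1/p})\subseteq J_G^{(n+1)}$ for every $n\geq 0$.

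Next, I would invoke the theorem of Jahani, Bayati and Rahmati \cite{MR4520291}, which states that for a caterpillar graph $G$ one has $J_G^{(n)} = J_G^{n}$ for every $n\geq 0$. Substituting this equality into the filtration, we obtain that $\{J_G^{n}\}_{n\geq 0}$ itself satisfies all four conditions of an $F$-split filtration: the first three conditions are immediate properties of powers of an ideal, and the fourth follows because $\phi((J_G^{np+1})^{1/p}) = \phi((J_G^{(np+1)})^{1/p})\subseteq J_G^{(n+1)} = J_G^{n+1}$.

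Finally, I would apply \cite[Theorem 4.7]{destefani2021blowup}, which asserts that $\{J_G^{n}\}_{n\geq 0}$ being an $F$-split filtration is precisely equivalent to both the Rees algebra $\cR(J_G)$ and the associated graded ring $\gr(J_G)$ being $F$-split. This yields the corollary directly.

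There is no real obstacle here, since all of the substantive work has already been done: the combinatorial computation of a witness $f$ belonging to $\bigcap_i \p_S^{\Ht(\p_S)}$ was carried out in the previous theorem, and the equality $J_G^{(n)} = J_G^{n}$ for caterpillar graphs is cited from \cite{MR4520291}. The only thing to be careful about in writing the proof is to state the logical flow cleanly, namely that an $F$-split filtration structure on the symbolic powers transports to the ordinary powers via the equality of the two filtrations, and that this is exactly the hypothesis required to conclude $F$-splitness of $\cR(J_G)$ and $\gr(J_G)$ from the cited blowup result.
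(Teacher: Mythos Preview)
Your proposal is correct and follows essentially the same approach as the paper: combine the symbolic $F$-splitness of $J_G$ from the preceding theorem with the equality $J_G^{(n)}=J_G^{n}$ for caterpillar graphs from \cite{MR4520291}, and then invoke \cite[Theorem 4.7]{destefani2021blowup}. The paper presents this reasoning in a single sentence just before the corollary, whereas you have spelled out the filtration conditions more explicitly, but the logical content is identical.
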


\begin{definition}
Let $G$ be a simple graph with $d$ vertices and edge set $E$. We say that $G$ is closed if there is a labeling of the vertices of $G$ by $[d]$ such that the following condition holds: if $\{ i, k \} \in E$, then  $\{ i, j \} \in E$ and $\{ j, k \} \in E$ for every $j$ such that $i < j < k$.
\end{definition}

\begin{definition}
Let $G$ be a simple graph with $d$ vertices and edge set $E$. We say that $G$ is weakly closed if there is a labeling of the vertices of $G$ by $[d]$ such that the following condition holds: if $\{ i, k \} \in E$, then  $\{ i, j \} \in E$ or $\{ j, k \} \in E$ for every $j$ such that $i < j < k$.
\end{definition}

It is known that the binomial edge ideals associated to some closed graphs are symbolic F-split \cite[Proposition 6.39]{destefani2021blowup}. The closed graphs are part of a bigger family of graphs, the weakly closed graphs.
Complete multipartite graphs and caterpillar graphs are weakly closed \cite{MR3882405}. 
It is still not known if the binomial edge ideals of weakly closed graphs are symbolic F-split. In the remaining of this section we study the symbolic $F$-splitness of two more families of graphs. We also explore the weakly closed property for these graphs.

\begin{definition}
For $i \in \{1,2\}$, let $G_i$ be a simple graph with vertex set $V_i$  and edge set $E_i$. Furthermore, suppose $V_1 \cap V_2 = \emptyset$. The join of $V_1$ and $V_2$ is the graph $G$ with vertex set $V = V_1 \cup V_2$ and edge set $E = E_1 \cup E_2 \cup \{ (v_1, v_2) \:|\: v_1 \in V_1, v_2 \in V_2 \}$.
\end{definition}

Sharifan proved that for a connected simple graph $G$, $|\Ass(J_G)| = 2$ if and only if $G$ is the join of a complete graph $G_0$ and a disjoint union of complete graphs $G_1, \dots, G_m$ \cite[Corollary 4.2]{Sharifan2AssociatedPrimes}. Furthermore, the minimal primes of $J_G$ are given by $\p_{\emptyset} = I_2(X)$, and $\p_{V_0}$, where $V_0$ denotes the vertex set of $G_0$ \cite[Lemma 4.1]{Sharifan2AssociatedPrimes}. 

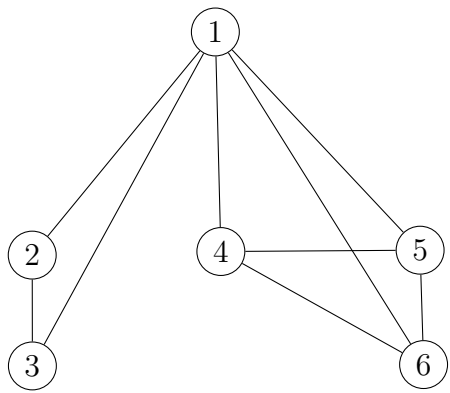
\begin{figure}[hbt!] \centering \begin{tikzpicture}
         \newcommand*\pointsqxcFh{903.8910660689519/344.741876289422/0/1,782.1785150789514/493.14283590268496/1/2,782.2968294394094/566.8585367065896/2/3,907.5494620686293/490.7647664703755/3/4,1039.9655706205815/489.8691036748871/4/5,1042.273364879736/565.9551239213909/5/6}
          \newcommand*\edgesqxcFh{1/0,2/0,2/1,3/0,4/0,4/3,5/0,5/3,5/4}
          \newcommand*\scaleqxcFh{0.02}
          \foreach \x/\y/\z/\w in \pointsqxcFh {
          \node (\z) at (\scaleqxcFh*\x,-\scaleqxcFh*\y) [circle,draw,inner sep=3pt] {$\w$};
          }
\foreach \x/\y in \edgesqxcFh {
          \draw (\x) -- (\y);
          }
      \end{tikzpicture}\caption[Caption for LOF]{A graph $G$ such that $|\Ass(J_G)| = 2$. Note that in this case $G$ is the join of $G_0 = (\{1\}, \emptyset
      )$ and the disjoint union of $G_1$ and $G_2$ where $G_1$ is the complete graph on the vertex set $\{2,3\}$ and $G_2$ is the complete graph on the vertex set $\{4,5,6\}$.\protect\footnotemark} \label{fig:M3} \end{figure}

\footnotetext{This image was generated using the package Visualize \cite{VisualizeSource} in Macaulay2 \cite{M2}.}

\begin{proposition}\label{prop AssG2AreWC}
Let $G$ be a graph such that $|\Ass(J_G)| = 2$. Then $G$ is weakly closed. 
\end{proposition}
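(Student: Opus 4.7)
My plan is to exhibit an explicit labeling of the vertices of $G$ and verify directly that it witnesses the weakly closed property. The starting point is Sharifan's structure theorem (cited in the paragraph preceding the proposition): since $|\Ass(J_G)| = 2$, the graph $G$ is the join of a complete graph $G_0$ on a vertex set $V_0$ with the disjoint union of complete graphs $G_1, \dots, G_m$ on vertex sets $V_1, \dots, V_m$. For $|\Ass(J_G)|$ to equal $2$ (rather than $1$), one needs $V_0 \neq \emptyset$ and $m \geq 2$; I will note this at the outset to rule out the degenerate case in which $G$ is itself a complete graph (where the statement is trivial since complete graphs are closed).

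Next, I propose the labeling that places $V_0$ first, followed by $V_1, V_2, \dots, V_m$ in order, each block labeled by a contiguous interval of positive integers. Explicitly, the vertices of $V_0$ receive the labels $1, \dots, |V_0|$, the vertices of $V_1$ receive the next $|V_1|$ labels, and so on. I then need to check that for every edge $\{i,k\}$ with $i < k$ and every $j$ with $i < j < k$, either $\{i,j\}$ or $\{j,k\}$ is an edge of $G$. By the join structure, any edge of $G$ either lies entirely inside one of the parts $V_r$ ($0 \leq r \leq m$) or has one endpoint in $V_0$ and the other in some $V_r$ with $r \geq 1$; moreover, since $V_0$ is labeled first, in the latter case the $V_0$-endpoint is automatically the smaller label $i$.

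The verification will then split into the two corresponding cases. If $i, k \in V_r$, then because $V_r$ is a contiguous block of labels, any intermediate $j$ also lies in $V_r$, and completeness of $V_r$ (or of $G_0$) forces $\{i,j\} \in E$. If instead $i \in V_0$ and $k \in V_r$ with $r \geq 1$, an intermediate $j$ belongs either to $V_0$, to $V_r$, or to some $V_s$ with $1 \leq s < r$. In the first case $\{i,j\}$ is an edge because $G_0$ is complete; in the second case $\{j,k\}$ is an edge because $G_r$ is complete; in the third case $\{i,j\}$ is an edge by the join (since $i \in V_0$).

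I anticipate the only delicate point is the bookkeeping in the last case: one must be sure that a label strictly between $i \in V_0$ and $k \in V_r$ cannot land in some $V_s$ with $s > r$, which is immediate from the chosen ordering $V_1 < V_2 < \cdots < V_m$ but is exactly where an arbitrary labeling of the $V_i$'s would fail. Beyond that, the argument is purely combinatorial and each subcase reduces to one of the three elementary edges guaranteed by the join-of-completes description.
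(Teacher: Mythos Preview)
Your proposal is correct and follows essentially the same approach as the paper: both use Sharifan's structural description of $G$ as the join of a complete graph $G_0$ with a disjoint union of complete graphs $G_1,\dots,G_m$, label the vertices by the contiguous blocks $V_0,V_1,\dots,V_m$ in that order, and then verify the weakly closed condition by the same case analysis on whether the edge $\{i,k\}$ lies inside a single part or crosses from $V_0$ to some $V_r$. Your treatment is in fact slightly more careful than the paper's in that you explicitly separate the three subcases for the location of the intermediate vertex $j$ and flag the ordering reason why $j$ cannot lie in some $V_s$ with $s>r$.
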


\begin{proof}
Since $|\Ass(J_G)| = 2$, we have that $G$ is the join of a complete graph $G_0$ and a disjoint union of complete graphs $G_1, \dots, G_m$. Let $n_i$ be the amount of vertices of $G_i$. We relabel the vertices of $G$ by relabeling the vertices of each $G_i$ as follows: relabel the vertices of $G_0$ by $1, 2, \dots, n_0$ and for $i > 0$, relabel the vertices of $G_i$ by $u_i + 1, u_i + 2, \dots, u_i + n_i$, where $u_i = n_0 + \dots + n_{i - 1}$. 

Now we show that this labeling satisfies the weakly closed property. Let $\{a,c\} \in E(G)$ with $a < c$. Suppose there is a $b$ such that $a < b < c$. 

If $\{a,c\} \in E(G_i)$ for some $i$, then $\{a,b\} \in E(G_i) \subseteq E(G)$, since $G_i$ is complete and $b \in V(G_i)$ by our given labeling. 

Otherwise, $a$ and $c$ belong to $V(G_i)$ and $V(G_j)$ respectively, for some $i,j$ such that $i < j$ because of the given labeling. Since the $G_i's$ are pairwise disjoint for every $i > 0$, we have that $a \in G_0$. If $b \in G_0$, then $\{ a, b\} \in E(G_0) \subseteq E(G)$ since $G_0$ is complete. Otherwise, $b \in E(G_l)$ for some $l > 0$. Since $G$ is the join of $G_0$ and the disjoint union of the graphs $G_1, \dots, G_m$, we have that $\{a, b\} \in E(G)$.

We conclude that $G$ is weakly closed.
\end{proof}

\begin{theorem}\label{thm SymbFsplitAss2}
Let $G$ be such that $|\Ass(J_G)| = 2$. Then $J_G$ is symbolic F-split.
\end{theorem}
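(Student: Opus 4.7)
The plan is to apply Lemma \ref{lemma SymbolicFsplitnessCriteriaOnMinimialPrimes}: it suffices to produce a single $f \in R$ lying in $\p^{\Ht(\p)}$ for each minimal prime $\p$ of $J_G$ and satisfying $f^{p-1} \notin \m^{[p]}$. By Sharifan's structure theorem recalled just before Proposition \ref{prop AssG2AreWC}, $G$ is the join of a complete graph $G_0$ on a vertex set $V_0$ of size $n_0$ with a disjoint union of complete graphs $G_1, \dots, G_m$, and the only minimal primes of $J_G$ are $\p_\emptyset = I_2(X)$, of height $d - 1$, and $\p_{V_0}$, of height $n_0 + d - m$. I will adopt the relabeling from the proof of Proposition \ref{prop AssG2AreWC}, so that $V_0 = \{1, \dots, n_0\}$ and each $V(G_j)$ is a block of consecutive integers $\{u_j + 1, \dots, u_j + n_j\}$ with $u_j = n_0 + n_1 + \cdots + n_{j-1}$ for $j \geq 1$.

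Imitating the complete multipartite and caterpillar cases, my candidate is $f = y_1 f_{1,2} f_{2,3} \cdots f_{d-1,d} x_d$. The containment $f \in \p_\emptyset^{d-1}$ is immediate, since each of the $d-1$ factors $f_{i,i+1}$ lies in $I_2(X)$. The main technical step is to verify $f \in \p_{V_0}^{n_0 + d - m}$ by tallying the $\p_{V_0}$-multiplicity of each factor. The factor $y_1$ contributes $1$; each of the $n_0 - 1$ factors $f_{i,i+1}$ with $i, i+1 \in V_0$ contributes $2$, since both pairs of variables lie in $\p_{V_0}$; the crossover factor $f_{n_0, n_0+1}$ contributes $1$; for each $j \geq 1$ and each $i$ with $u_j + 1 \leq i < u_j + n_j$, the edge $\{i, i+1\}$ sits inside the complete graph $G_j$, so $f_{i,i+1} \in J_{G_j} \subseteq \p_{V_0}$, producing a combined contribution of $\sum_{j=1}^m (n_j - 1) = d - n_0 - m$; and the $m - 1$ bridge factors $f_{u_{j+1}, u_{j+1}+1}$ (joining different $G_j$'s for $j \geq 1$) contribute $0$, since those edges are absent from $G$ by the join structure. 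Summing, $1 + 2(n_0 - 1) + 1 + (d - n_0 - m) = n_0 + d - m$ matches the required height exactly, and the factor $x_d$ contributes a harmless extra $0$.

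For the condition $f^{p-1} \notin \m^{[p]}$, I will use a leading-monomial argument: under the lexicographic order with $x_1 > \cdots > x_d > y_1 > \cdots > y_d$, each $f_{i,i+1}$ has leading term $x_i y_{i+1}$, so $f$ has leading term $x_1 x_2 \cdots x_d \cdot y_1 y_2 \cdots y_d$, and hence $f^{p-1}$ has leading term $(x_1 \cdots x_d y_1 \cdots y_d)^{p-1}$; every variable appears in this surviving monomial with exponent $p - 1 < p$, so $f^{p-1} \notin \m^{[p]}$. Lemma \ref{lemma SymbolicFsplitnessCriteriaOnMinimialPrimes} then concludes that $J_G$ is symbolic $F$-split. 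The only step demanding genuine care is the multiplicity count in $\p_{V_0}$; it goes through precisely because the labeling of Proposition \ref{prop AssG2AreWC} makes each $V(G_j)$ a block of consecutive indices, which sorts the edges of the path $1 - 2 - \cdots - d$ into the clean cases above.
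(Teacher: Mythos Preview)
Your proof is correct and follows essentially the same approach as the paper: the same element $f = y_1 f_{1,2}\cdots f_{d-1,d}\,x_d$, the same labeling from Proposition~\ref{prop AssG2AreWC}, and the same appeal to Lemma~\ref{lemma SymbolicFsplitnessCriteriaOnMinimialPrimes}. The only cosmetic difference is that the paper packages the $\p_{V_0}$-count by exhibiting an explicit divisor $g\mid f$ with $g\in\p_{V_0}^{h}$, whereas you tally the contribution of each factor of $f$ directly; your count $1+2(n_0-1)+1+(d-n_0-m)=n_0+d-m$ matches the paper's, and you additionally spell out the leading-term argument for $f^{p-1}\notin\m^{[p]}$, which the paper simply asserts.
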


\begin{proof}
Let $G$ be a connected simple graph such that $|\Ass(J_G)| = 2$. We know that $G$ is the join of a complete graph $G_0$ and the disjoint union of complete graphs $G_1, \dots, G_m$. Label the vertices of $G_0$, \dots, $G_m$ as stated in the proof of Proposition \ref{prop AssG2AreWC}

Let $f = y_1 f_{1,2} f_{2,3} \dots f_{d-1,d} x_d$. Let $\p_S$ be a minimal prime of $J_G$ and let $h = \Ht(\p_S)$. We proceed to prove that $f \in \p_S^{h}$ for each minimal prime $\p_S$ of $J_G$. We know that the minimal primes of $J_G$ are given by $\p_S$ where $S \in \{\emptyset, V_0\}$, where $V_0$ is the vertex set of $G_0$. Thus, we consider the cases $S = \emptyset$ and $S \neq \emptyset$.

Suppose $S = \emptyset$. Then $f_{i,i+1} \in \p_S$ for every $i \in [d-1]$. Hence, $f \in \p_S^{d - 1}$. Since $\Ht(\p_S) = |S| + d - c(S) = 0 + d - 1 = d - 1$, we conclude that $f \in \p_S^{h}$. 

Suppose $S \neq \emptyset$. Then $S = V_0 = [n_0]$. We have that $h = \Ht(\p_S) = |S| + d - c(S)= n_0 + d - m$.

Let
$$ g = 
x_1
(\prod_{i = 0}^{m} g_i)
f_{n_0, n_0 + 1}
,
$$

where for each $i$,
$$g_i = \twopartdefo
{\prod_{j = 1}^{n_i - 1} f_{j, j + 1}} {n_i \geq 2}
{1} $$

Observe that $x_1, f_{n_0, n_0 + 1} \in \p_S$. Furthermore, notice that $g_0 \in \p_S ^{2(n_0 - 1)}$ and $g_i \in \p_S ^{n_i - 1}$ for $i \in [m]$. Hence, $g \in \p_S ^{b}$, where 
$$b = 2 + 2(n_0 - 1) + \sum_{i = 1}^m (n_i - 1) = n_0 + \sum_{i = 1}^m n_i - m = n_0 + d - m = h.$$

Thus $b = h$ and so, $g \in \p_S^{h}$.  This implies that $f \in \p_S^{h}$ since $g$ divides $f$. 

Since $f^{p-1} \not \in \m^{[p]}$, where $\m$ is the irrelevant ideal, Lemma \ref{lemma SymbolicFsplitnessCriteriaOnMinimialPrimes} implies that $J_G$ is symbolic F-split.
\end{proof}

Jahani, Bayati and Rahmati proved that the symbolic powers and ordinary powers of binomial edge ideals of graphs such that $|\Ass(J_G) = 2|$ are the same \cite{MR4520291}, hence the powers of $J_G$ form an F-split filtration. This implies that the Rees algebra $\cR(J_G)$ and the associated graded ring $\gr(J_G)$ are F-split \cite[Theorem 4.7]{destefani2021blowup}.

\begin{corollary}
Let $G$ be such that $|\Ass(J_G)| = 2$. Then $\cR(J_G)$  and $\gr(J_G)$ are $F$-split.
\end{corollary}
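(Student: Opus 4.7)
The plan is to follow exactly the same template already used twice in the paper, namely in the corollaries immediately after the complete multipartite and caterpillar theorems. The three ingredients are all in place: Theorem \ref{thm SymbFsplitAss2} establishes that $J_G$ is symbolic $F$-split, the cited result of Jahani, Bayati and Rahmati supplies the equality $J_G^{(n)} = J_G^n$ for every $n \geq 0$ when $|\Ass(J_G)| = 2$, and \cite[Theorem 4.7]{destefani2021blowup} converts an $F$-split filtration by ordinary powers into $F$-splitness of the corresponding blowup algebras.

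Concretely, I would argue as follows. By Theorem \ref{thm SymbFsplitAss2}, the filtration $\{J_G^{(n)}\}_{n \geq 0}$ satisfies all four axioms of an $F$-split filtration. Using $J_G^{(n)} = J_G^n$, I substitute ordinary powers into the four conditions and observe that $\{J_G^n\}_{n \geq 0}$ is literally the same filtration; in particular, property (iv) transfers verbatim, since the splitting $\phi : R^{1/p} \to R$ provided by the symbolic $F$-splitness satisfies $\phi((J_G^{np+1})^{1/p}) = \phi((J_G^{(np+1)})^{1/p}) \subseteq J_G^{(n+1)} = J_G^{n+1}$. Hence the ordinary power filtration is an $F$-split filtration.

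Finally, invoking \cite[Theorem 4.7]{destefani2021blowup}, whose hypothesis is precisely that the ordinary powers form an $F$-split filtration, I conclude that both the Rees algebra $\cR(J_G)$ and the associated graded ring $\gr(J_G)$ are $F$-split.

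There is no genuine obstacle here: the proof is essentially a one-line citation assembly, and the only point to be careful about is stating the transfer of the splitting property (iv) from the symbolic to the ordinary filtration, which is immediate from the equality of ideals. The corollary is therefore a direct analogue of the two preceding corollaries, and its proof can be presented in a single short paragraph in the paper.
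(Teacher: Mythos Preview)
Your proposal is correct and matches the paper's approach exactly: the paper's justification (given in the paragraph immediately preceding the corollary) combines Theorem \ref{thm SymbFsplitAss2} with the equality $J_G^{(n)} = J_G^n$ from \cite{MR4520291} to see that the ordinary powers form an $F$-split filtration, and then invokes \cite[Theorem 4.7]{destefani2021blowup}. Your explicit verification that property (iv) transfers is a harmless elaboration of what the paper leaves implicit.
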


Now we study another family of graphs. We begin with a definition due to Bolognini, Macchia and Strazzanti.

\begin{definition}[{\cite{CMBEIandAccessibleGraphs}}]
We say that a graph $G$ is accessible if $J_G$ is unmixed and the following holds: for every non empty cut set $S$ of $G$, there is an $s \in S$ such that $S \setminus \{s\}$ is a cut set of $G$.
\end{definition}

\begin{definition}\label{def GmGraphs}
Let $m \geq 1$. The graph $G_m$ has vertex set $[2m]$. The edges $E$ of $G_m$ are given by the following rule: $\{a,b\} \in E$ if and only if $a$ is odd, $b$ is even and $a < b$.
\end{definition}

Note that $G_m$ is a connected bipartite graph on the vertex sets $A_m = \{1, 3, \dots, 2m - 1\}$ and $B_m = \{2, 4, \dots, 2m\}$.

In some situations the next labeling of $G_m$ is useful. We call $F_m$ to the graph $G_m$ with the following relabeling of its vertices: if $g$ is a vertex of $G_m$ such that $g$ is even, relabel it as $g - 1$. Otherwise, relabel $g$ as $g + 1$.

We define two operations that arise in a characterization of accessible bipartite graphs which we use afterwards.

\begin{definition}[{\cite{CMBEIandAccessibleGraphs}}]
Let $G, H$ be graphs such that $G$ has a leaf $g$, and $H$ has a leaf $h$. We denote by $(G,g) * (H,h)$ the graph obtained by identifying $g$ with $h$. 
\end{definition}

\begin{definition}[{\cite{CMBEIandAccessibleGraphs}}]
Let $G, H$ be graphs such that $G$ has a leaf $g$ with neighbour $g'$, and $H$ has a leaf $h$ with neighbour $h'$. We denote by $(G,g) \circ (H,h)$ the graph obtained by identifying $g'$ with $h'$ and removing $g$ and $h$. 
\end{definition}

It was proved by Bolognini, Macchia and Strazzanti that if $G$ is a bipartite graph, then the following are equivalent \cite[Theorem 6.1]{BEIofBipartiteGraphs}:

\begin{itemize}
\item[1)] $G$ is accessible.  
\item[2)] 
$G = A_1 * A_2 * \dots * A_m$, where $A_i = F_n$ or $A_i = F_{n_1} \circ F_{n_2} \circ \dots \circ F_{n_r}$ with $n \geq 1$ and $n_j \geq 3$ (this implies that $G$ is traceable \cite[Corollary 6.9]{CMBEIandAccessibleGraphs}).
\item[3)] $J_G$ is Cohen-Macaulay.
\end{itemize}

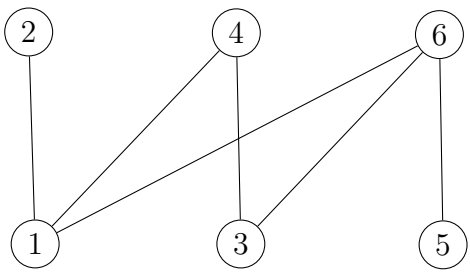
\begin{figure}[hbt!] \centering \begin{tikzpicture}
         \newcommand*\pointscihSE{679.5671503661792/483.9060444380557/0/1,675.2754275593505/342.93019298759145/1/2,813.268901704861/343.4853150870035/2/4,948.2393939763299/344.6754700629197/3/6,816.263878887222/483.94444080064534/4/3,950.6668294026965/484.498076231039/5/5}
          \newcommand*\edgescihSE{1/0,2/0,3/0,4/2,4/3,5/3}
          \newcommand*\scalecihSE{0.02}
          \foreach \x/\y/\z/\w in \pointscihSE {
          \node (\z) at (\scalecihSE*\x,-\scalecihSE*\y) [circle,draw,inner sep=3pt] {$\w$};
          }
\foreach \x/\y in \edgescihSE {
          \draw (\x) -- (\y);
          }
      \end{tikzpicture}\caption[Caption for LOF]{$G_m$ with $m = 3$.\protect\footnotemark} \label{fig:M4} \end{figure}
\footnotetext{This image was generated using the package Visualize \cite{VisualizeSource} in Macaulay2 \cite{M2}.}

Now we show that all accessible bipartite graphs are weakly closed. We start with the following proposition.

\begin{proposition}\label{prop FmGraphsAreWC}
For every $m \geq 1$, the graph $G_m$ is weakly closed.
\end{proposition}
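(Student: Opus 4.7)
The plan is to show that the natural labeling of the vertices of $G_m$ by $[2m]$ already witnesses the weakly closed property, so no relabeling is needed. Pick any edge $\{i,k\}$ of $G_m$ with $i<k$, and any vertex $j$ with $i<j<k$; we must exhibit either the edge $\{i,j\}$ or the edge $\{j,k\}$.

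By the defining rule of $G_m$, the edge $\{i,k\}$ with $i<k$ forces $i$ to be odd and $k$ to be even. Now split on the parity of the intermediate vertex $j$. If $j$ is odd, then $j$ is odd, $k$ is even, and $j<k$, so $\{j,k\}\in E(G_m)$. If $j$ is even, then $i$ is odd, $j$ is even, and $i<j$, so $\{i,j\}\in E(G_m)$. In either case one of the two required edges is present, so the weakly closed condition holds.

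The main step is the parity case analysis above, which is entirely immediate from Definition \ref{def GmGraphs}; there is essentially no obstacle. I expect this short proof to be used later as a building block when combining the $G_m$'s (or their relabeled versions $F_m$) under the operations $*$ and $\circ$ to conclude that every accessible bipartite graph is weakly closed, but for the statement at hand only the one-line parity argument is needed.
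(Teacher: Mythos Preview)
Your proof is correct and is essentially identical to the paper's own proof: both use the given labeling of $G_m$ on $[2m]$, note that an edge $\{i,k\}$ with $i<k$ forces $i$ odd and $k$ even, and then split on the parity of the intermediate vertex to exhibit the required edge.
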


\begin{proof}
Let $m \geq 1$ and consider the graph $G_m$. Suppose that $\{a,b\}$ is an edge of $G_m$ with $a$ odd, $b$ even and $a < b$. Suppose there is a $v$ such that $a < v < b$. If $v$ is odd, then $\{v,b\}$ is an edge of $G_m$ since $b$ is even and $v < b$. If $v$ is even, then $\{a,v\}$ is an edge of $G_m$ since $a$ is odd and $a < v$. This implies that $G_m$ is weakly closed.
\end{proof}

\begin{lemma}\label{lemma CircIsWC}
For $i = 1, 2$, let $H_i$ be a weakly closed graph on the vertex set $[m_i]$. Suppose $H_i$ has a labeling on $[m_i]$ such that:
\begin{itemize}
\item[1)] The labeling fulfils the weakly closed condition.
\item[2)] $\{1,2\}, \{m_i - 1, m_i\} \in E(H_i)$. 
\item[3)] The vertices $1$ and $m_i$ are leaves.
\end{itemize}
Then $(H_1, m_1) \circ (H_2, 1)$ is weakly closed.
\end{lemma}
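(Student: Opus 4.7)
The plan is to construct an explicit labeling of $G := (H_1, m_1) \circ (H_2, 1)$ that witnesses the weakly closed property, and then verify the condition by splitting edges of $G$ according to which of $H_1$ or $H_2$ they originate from. The hypotheses guarantee that $m_1 - 1$ is the unique neighbor of the leaf $m_1$ in $H_1$ and that $2$ is the unique neighbor of the leaf $1$ in $H_2$, so $G$ is precisely the graph obtained by deleting $m_1$ and $1$ and identifying $m_1 - 1$ with $2$.

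For the labeling I would keep the labels of the vertices of $H_1 \setminus \{m_1\}$ as $1, 2, \ldots, m_1 - 1$, and relabel a vertex with original label $k \geq 3$ of $H_2$ as $m_1 + k - 3$; the identified vertex then carries the label $m_1 - 1$, and the vertex set of $G$ becomes $[m_1 + m_2 - 3]$. The key structural fact I would exploit is that every edge of $G$ has either both endpoints in $\{1, \ldots, m_1 - 1\}$ (inherited from $H_1$) or both endpoints in $\{m_1 - 1, m_1, \ldots, m_1 + m_2 - 3\}$ (inherited from $H_2$), because the two halves of $G$ meet only at the identified vertex.

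Given this, I would pick an arbitrary edge $\{a, c\} \in E(G)$ with $a < c$ and any $b$ with $a < b < c$. If $\{a, c\}$ comes from $H_1$, then $c \leq m_1 - 1$, which forces $b \leq m_1 - 2$, so $b$ is a vertex of $H_1$ distinct from both $m_1$ and the identified vertex; the weakly closed labeling of $H_1$ supplies $\{a, b\} \in E(H_1)$ or $\{b, c\} \in E(H_1)$, which transfers directly to $G$. If instead $\{a, c\}$ comes from $H_2$, then $a \geq m_1 - 1$, which forces $b \geq m_1$, so $b$ corresponds to a vertex of $H_2$ with original label $\geq 3$. Since the relabeling $k \mapsto m_1 + k - 3$ is monotone on the relevant vertices, the order $a < b < c$ in $G$ matches the corresponding order of $H_2$-labels, so the weakly closed property of $H_2$ gives the required edge.

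There is no substantial obstacle; the only potential subtlety is whether $b$ could coincide with the identified vertex $m_1 - 1$, but the strict inequalities $b \leq m_1 - 2$ in the first case and $b \geq m_1$ in the second case rule this out. The argument is essentially a bookkeeping verification that the two halves of $G$ do not interact through any edge other than those incident to the identified vertex, so the weakly closed certificates for $H_1$ and $H_2$ combine into one for $G$.
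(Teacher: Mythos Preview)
Your proposal is correct and follows essentially the same approach as the paper: both construct the identical labeling (keep $H_1$-labels on $\{1,\ldots,m_1-1\}$ and shift $H_2$-labels by $m_1-3$), observe that every edge of $(H_1,m_1)\circ(H_2,1)$ lies entirely within one of the two halves meeting at the identified vertex $m_1-1$, and then apply the weakly closed hypothesis to the appropriate $H_i$. Your treatment is in fact slightly more explicit than the paper's in justifying why the intermediate vertex $b$ falls in the correct range.
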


\begin{proof}
Let $H = (H_1, m_1) \circ (H_2, 1)$. Observe that $H$ has $m_1 + m_2 - 3$ vertices. We relabel the vertices of $H$ as follows. Let $h \in V(H)$. If $h$ is a vertex of $H_1$, we keep its original label as in $H_1$, otherwise, we relabel $h$ as $h + m_1 - 3$. We claim that this labeling is such that it fulfils the weakly closed condition. We proceed to prove this.

Let $u, v, w \in V(H)$ such that $u < v < w$ and $\{u,w\} \in E(H)$. We consider the following cases:
\begin{itemize}
\item[Case 1:] Suppose $u, w \in V(H_1)$. Hence, $v \in V(H_1)$. Since $H_1$ is weakly closed, then we have that $\{u,v\} \in E(H_1)$ or $\{v,w\} \in E(H_2)$. Thus, $\{u,v\} \in E(H)$ or $\{v,w\} \in E(H)$.
\item[Case 2:] Suppose $u, w \in V(H_2)$. Hence, $v \in V(H_2)$. Since the relabeling of the elements of $H_2$ in $H$ preserves order  and $H_2$ is weakly closed, we have that $\{u,v\} \in V(H_2)$ or $\{v,w\} \in V(H_2)$. Thus, $\{u,v\} \in E(H)$ or $\{v,w\} \in E(H)$.
\item[Case 3:] Suppose $u \in V(H_1)$ and $w \in V(H_2)$. We have that $m_1 - 1$ is the only vertex of $H$ which is also a vertex of $H_1$ and $H_2$ by construction. Hence, this case does not occur. 
\end{itemize}
We conclude that $H$ is weakly closed.
\end{proof}

Now we state the analogous proposition for the $*$ operation. The proof is very similar to that of the $\circ$ operation.

\begin{lemma}\label{lemma StarIsWC}
For $i = 1, 2$, let $H_i$ be a weakly closed graph on the vertex set $[m_i]$. Suppose $H_i$ has a labeling on $[m_i]$ such that:
\begin{itemize}
\item[1)] The labeling fulfills the weakly closed condition.
\item[2)] $\{1,2\}, \{m_i - 1, m_i\} \in E(H_i)$.
\item[3)] The vertices $1$ and $m_i$ are leaves.
\end{itemize}
Then $(H_1, m_1) * (H_2, 1)$ is weakly closed.
\end{lemma}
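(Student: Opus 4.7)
The plan is to follow the same blueprint as the proof of Lemma \ref{lemma CircIsWC}, using a relabeling of $H = (H_1,m_1) * (H_2,1)$ that shifts the vertices of $H_2$ just enough to make the two hypothesis-compatible labelings glue consistently at the identified vertex. Specifically, I would keep the labels of $V(H_1) = \{1,\dots,m_1\}$ and relabel each $h \in V(H_2)$ by $h + m_1 - 1$, so that the identified vertex receives the label $m_1$ from both copies (once as $m_1 \in V(H_1)$ and once as $1 + m_1 - 1 \in V(H_2)$), and the image of $V(H_2)$ is the terminal segment $\{m_1,\dots,m_1+m_2-1\}$. The shift is $m_1-1$ rather than $m_1 - 3$ (as in the $\circ$ case) precisely because the $*$ operation identifies a single pair of leaves without removing any vertex, so the total vertex count of $H$ is $m_1 + m_2 - 1$.

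With this labeling in place, the verification of the weakly closed condition splits into three cases on an arbitrary triple $u < v < w$ in $V(H)$ with $\{u,w\} \in E(H)$. \textbf{Case 1:} if $u,w \in V(H_1)$, the initial-segment description forces $v \in V(H_1)$, and the weakly closed property of $H_1$ yields either $\{u,v\}$ or $\{v,w\}$ in $E(H_1) \subseteq E(H)$. \textbf{Case 2:} if $u$ and $w$ both lie in the image of $V(H_2)$, the order-preserving shift forces $v$ there too, and the weakly closed property of $H_2$ finishes the case after transporting the edge back to $E(H)$. \textbf{Case 3:} if $u \in V(H_1) \setminus \{m_1\}$ and $w$ lies in the image of $V(H_2) \setminus \{m_1\}$, no such edge $\{u,w\}$ exists in $H$, because the edges of $H$ are exactly the images of the edges of $H_1$ and $H_2$, and the gluing at a pair of leaves introduces no new edges across $V(H_1) \setminus \{m_1\}$ and $V(H_2) \setminus \{1\}$; hence this case is vacuous.

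I do not expect any substantive obstacle. The only delicate point, and the reason the three cases are stated carefully, is to confirm that the identified vertex interacts cleanly with both labelings and that Case 3 really is vacuous. Both facts are immediate from the definition of $*$, which only glues two leaves without introducing additional edges. This is precisely why the argument can be modelled on the $\circ$ case, with a shift of $m_1 - 1$ in place of $m_1 - 3$ reflecting that no vertex is deleted in the construction.
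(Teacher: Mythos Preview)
Your proposal is correct and follows essentially the same approach as the paper's own proof: the same shift-by-$m_1-1$ relabeling and the same three-case analysis, with Case~3 vacuous because the only shared vertex between the two pieces is the glued leaf $m_1$. If anything, your formulation of Case~3 is slightly more careful than the paper's in that you explicitly exclude the identified vertex from each side before declaring the case vacuous.
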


\begin{proof}
Let $H = (H_1, m_1) * (H_2, 1)$. Observe that $H$ has $m_1 + m_2 - 1$ vertices. We relabel the vertices of $H$ as follows. Let $h \in V(H)$. If $h$ is a vertex of $H_1$, we keep its original label as in $H_1$, otherwise, we relabel $h$ as $h + m_1 - 1$. We claim that this labeling is such that it fulfils the weakly closed condition. We proceed to prove this.

Let $u, v, w \in V(H)$ such that $u < v < w$ and $\{u,w\} \in E(H)$. We consider the following cases:
\begin{itemize}
\item[Case 1:] Suppose $u, w \in V(H_1)$. Hence, $v \in V(H_1)$. Since $H_1$ is weakly closed, then we have that $\{u,v\} \in E(H_1)$ or $\{v,w\} \in E(H_2)$. Thus, $\{u,v\} \in E(H)$ or $\{v,w\} \in E(H)$.
\item[Case 2:] Suppose $u, w \in V(H_2)$. Hence, $v \in V(H_2)$. Since the relabeling of the elements of $H_2$ in $H$ preserves order  and $H_2$ is weakly closed, we have that $\{u,v\} \in V(H_2)$ or $\{v,w\} \in V(H_2)$. Thus, $\{u,v\} \in E(H)$ or $\{v,w\} \in E(H)$.
\item[Case 3:] Suppose $u \in V(H_1)$ and $w \in V(H_2)$. We have that $m_1 $ is the only vertex of $H$ which is also a vertex of $H_1$ and $H_2$ by construction. Hence, this case does not occur. 
\end{itemize}
We conclude that $H$ is weakly closed.
\end{proof}

\begin{remark}
Notice that the proofs of the previous lemmas can be adapted in order to show that $(H_1, m_1) * (H_2, m_2), (H_1, 1) * (H_2, m_2), (H_1, 1) * (H_2, 1), (H_1, m_1) \circ (H_2, m_2), (H_1, 1) \circ (H_2, m_2)$, and $(H_1, 1) \circ (H_2, 1)$ are weakly closed as well. 
\end{remark}

We are ready to prove that bipartite accessible graphs are weakly closed.

\begin{proposition}
Let $G$ be a bipartite graph. If $G$ is accessible, then $G$ is weakly closed. 
\end{proposition}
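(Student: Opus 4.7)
The plan is to invoke the Bolognini--Macchia--Strazzanti structure theorem for bipartite accessible graphs, which writes $G = A_1 * A_2 * \dots * A_m$ where each $A_i$ is either $F_n$ for some $n \ge 1$ or an iterated $\circ$-composition $F_{n_1} \circ \dots \circ F_{n_r}$ with every $n_j \ge 3$. The goal is then to run a double induction, first on the $\circ$-length of each block $A_i$ and then on the number of outer $*$-gluings, that reduces everything to Lemmas \ref{lemma CircIsWC} and \ref{lemma StarIsWC}.

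The base of both inductions is to exhibit, for each $n \ge 1$, a labeling of $F_n$ on $[2n]$ that is weakly closed and satisfies the three hypotheses of those lemmas: namely, that $1$ and $2n$ are leaves and that $\{1,2\},\{2n-1,2n\} \in E(F_n)$. Unwinding the definitions, the edges of $F_n$ are precisely $\{2i-1,2i\}$ for $1 \le i \le n$ together with $\{2i,2j-1\}$ for $1 \le i < j \le n$. The leaf and edge conditions are immediate from this description, and the weakly closed property then follows by a short case split on the parity of any intermediate vertex $v$ with $2i < v < 2j-1$, in the spirit of Proposition \ref{prop FmGraphsAreWC}: if $v$ is odd, $\{2i,v\}$ is an edge, and if $v$ is even, $\{v,2j-1\}$ is an edge.

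With the base case in hand, I would apply Lemma \ref{lemma CircIsWC} inductively, forming $(F_{n_1} \circ \dots \circ F_{n_{r-1}}, m_1) \circ (F_{n_r}, 1)$ at each step, to handle each $\circ$-block $A_i$. The relabeling prescribed in the proof of that lemma keeps $H_1$'s labels fixed near $1$ and shifts $H_2$'s labels uniformly to the right, so the output again has $1$ and its top label as leaves with the required adjacent edges $\{1,2\}$ and $\{|V|-1, |V|\}$; hence the three hypotheses persist and the induction continues. The same reasoning, now using Lemma \ref{lemma StarIsWC} and forming $(A_1 * \dots * A_{m-1}, \ell) * (A_m, 1)$, handles the outer $*$-composition and finishes the argument.

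The main obstacle is precisely this persistence check: verifying, by inspection of the relabeling formulas in the proofs of Lemmas \ref{lemma CircIsWC} and \ref{lemma StarIsWC}, that the three hypotheses of the gluing lemmas continue to hold after each application, so that they can be chained. A secondary nuisance is that, depending on which endpoints of the blocks are chosen in the decomposition provided by the structure theorem, one may need to appeal to the variants of these lemmas stated in the remark following Lemma \ref{lemma StarIsWC}; but once the bookkeeping is handled uniformly across these variants, the proposition follows by induction on the total number of $*$ and $\circ$ gluings.
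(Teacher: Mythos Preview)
Your proposal is correct and follows essentially the same route as the paper: invoke the Bolognini--Macchia--Strazzanti structure theorem, verify that each $F_n$ is weakly closed with the leaf/edge properties required by Lemmas \ref{lemma CircIsWC} and \ref{lemma StarIsWC}, and then chain those lemmas through the $\circ$- and $*$-decomposition. Your write-up is in fact more careful than the paper's, which glosses over both the persistence of the lemma hypotheses and the explicit verification of the $F_n$ base case.
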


\begin{proof}
Recall that if $G$ is bipartite and accessible, then $G = A_1 * A_2 * \dots * A_m$, where $A_i = F_n$ or $A_i = F_{n_1} \circ F_{n_2} \circ \dots \circ F_{n_r}$ with $n \geq 1$ and $n_j \geq 3$. Since $F_l$ is just a relabeling of $G_l$, and $G_l$ is weakly closed from Proposition \ref{prop AssG2AreWC}, we have that each $F_l$ is weakly closed. Lemma \ref{lemma CircIsWC} implies that every $A_i$ is weakly closed. Finally, \ref{lemma StarIsWC} implies that $G$ is weakly closed.
\end{proof}

\begin{theorem}\label{thm UnmixedAndTraceableAreSymbolicFsplit}
Let $G$ be a graph such that $J_G$ is unmixed and $G$ is traceable. Then $J_G$ is symbolic $F$-split.
\end{theorem}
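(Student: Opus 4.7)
The plan is to apply Lemma \ref{lemma SymbolicFsplitnessCriteriaOnMinimialPrimes} with the same universal choice of polynomial $f = y_1 f_{1,2} f_{2,3} \cdots f_{d-1,d} x_d$ used in the earlier theorems of this section. Since $G$ is traceable, I first relabel the vertices of $G$ by $1, 2, \dots, d$ along a Hamiltonian path, so that $\{i, i+1\} \in E(G)$ for every $i \in \{1, \dots, d-1\}$. With this labeling, each binomial $f_{i,i+1}$ lies in $J_G$.

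Next I exploit unmixedness to pin down the common height of the minimal primes. Since $G$ is connected, $\p_\emptyset = I_2(X)$ is a minimal prime of height $d-1$; unmixedness of $J_G$ therefore forces $\Ht(\p_S) = d-1$ for every cut set $S$ of $G$. The core combinatorial step is then to verify, for every cut set $S$ and every $i \in \{1, \dots, d-1\}$, the membership $f_{i,i+1} \in \p_S$. I split into two cases: if $\{i, i+1\} \cap S \neq \emptyset$, then one of $x_i, y_i$ or $x_{i+1}, y_{i+1}$ lies in $\p_S$, hence so does $f_{i,i+1}$; otherwise, both $i$ and $i+1$ lie in $[d]\setminus S$, and since $\{i, i+1\}$ is an edge of $G$ they lie in the same connected component $T$ of $G\setminus S$, so $f_{i,i+1} \in J_{\tilde G_T} \subseteq \p_S$. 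Multiplying the $d-1$ factors together gives
\[
f \in \p_S^{d-1} = \p_S^{\Ht(\p_S)}
\]
for every minimal prime $\p_S$ of $J_G$, which is exactly the hypothesis of Lemma \ref{lemma SymbolicFsplitnessCriteriaOnMinimialPrimes}.

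To conclude I need to check that $f^{p-1} \notin \m^{[p]}$, where $\m$ is the irrelevant ideal. Using the lexicographic order with $x_1 > y_1 > x_2 > y_2 > \cdots > x_d > y_d$, the initial term of each $f_{i,i+1} = x_i y_{i+1} - x_{i+1} y_i$ is $x_i y_{i+1}$, so the initial term of $f$ is
\[
\IN(f) = y_1 \cdot x_d \cdot \prod_{i=1}^{d-1} x_i y_{i+1} = x_1 x_2 \cdots x_d \, y_1 y_2 \cdots y_d.
\]
Therefore $\IN(f^{p-1}) = (x_1 \cdots x_d y_1 \cdots y_d)^{p-1}$, a monomial in which every variable appears with exponent $p - 1 < p$, so $f^{p-1} \notin \m^{[p]}$. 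Lemma \ref{lemma SymbolicFsplitnessCriteriaOnMinimialPrimes} then yields that $J_G$ is symbolic $F$-split.

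The only part that required any real argument is the containment $f_{i,i+1} \in \p_S$ for arbitrary cut sets $S$; once the Hamiltonian labeling is in place this is immediate, so the theorem is really a clean consequence of traceability (which supplies the edges $\{i,i+1\}$) combined with unmixedness (which forces every $\Ht(\p_S)$ to equal $d-1$, matching the number of available factors $f_{i,i+1}$ in $f$). No case analysis on the specific structure of cut sets is needed, in contrast to the complete multipartite and caterpillar proofs.
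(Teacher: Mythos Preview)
Your proof is correct and follows essentially the same route as the paper: relabel along a Hamiltonian path, use the same $f$, invoke unmixedness to get $\Ht(\p_S)=d-1$, and check $f^{p-1}\notin\m^{[p]}$. The one simplification the paper makes is that your case analysis for $f_{i,i+1}\in\p_S$ is unnecessary---after relabeling, $\{i,i+1\}\in E(G)$, so $f_{i,i+1}$ is a generator of $J_G\subseteq\p_S$, giving $f\in J_G^{d-1}$ directly; the paper then cites \cite[Corollary 5.10]{destefani2021blowup} instead of passing through Lemma~\ref{lemma SymbolicFsplitnessCriteriaOnMinimialPrimes}.
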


\begin{proof}
Let $f = y_1 f_{1,2} f_{2,3} \dots f_{d-1,d} x_d$. Since $G$ is traceable, we relabel the vertices of $G$ in such a way that the path $1, 2, \dots, d$ is part of $G$. Hence, $f \in J_{G}^{d-1}$, and so, $f^{p-1} \in J_{G}^{(p-1)(d-1)} \subseteq J_{G}^{((p-1)(d-1))}$. Since $J_{G}$ is unmixed, $\Ht(J_G) = d - 1$, and $f ^{p-1}\not \in \m^{[p]}$, we conclude that $J_{G}$ is symbolic $F$-split \cite[Corollary 5.10]{destefani2021blowup}.
\end{proof}

\begin{corollary}\label{cor BipartiteAccessibleSymbolicFsplit}
Let $G$ be a bipartite graph. If $G$ is accessible, then $J_{G}$ is symbolic F-split.
\end{corollary}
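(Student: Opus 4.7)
The plan is to obtain this corollary as an immediate consequence of Theorem \ref{thm UnmixedAndTraceableAreSymbolicFsplit}, by extracting from the Bolognini--Macchia--Strazzanti characterization of bipartite accessible graphs exactly the two hypotheses needed: unmixedness of $J_G$ and traceability of $G$.

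Concretely, I would argue as follows. Assume $G$ is a bipartite accessible graph. Invoking the equivalence $(1) \Leftrightarrow (2)$ from \cite[Theorem 6.1]{BEIofBipartiteGraphs} as recalled in the excerpt, we can write $G = A_1 * A_2 * \dots * A_m$, where each $A_i$ is either some $F_n$ or a $\circ$-composition $F_{n_1} \circ \dots \circ F_{n_r}$ with $n_j \geq 3$. As the excerpt notes, this decomposition forces $G$ to be traceable by \cite[Corollary 6.9]{CMBEIandAccessibleGraphs}. On the other hand, the equivalence $(1) \Leftrightarrow (3)$ from the same theorem gives that $J_G$ is Cohen--Macaulay. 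Since $J_G$ sits inside a polynomial ring (hence a Cohen--Macaulay ambient ring), being Cohen--Macaulay implies that $R/J_G$ has no embedded primes and all associated primes have the same height, i.e.\ $J_G$ is unmixed.

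With $J_G$ unmixed and $G$ traceable, Theorem \ref{thm UnmixedAndTraceableAreSymbolicFsplit} applies verbatim and yields that $J_G$ is symbolic $F$-split, which completes the argument.

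There is essentially no obstacle here: the corollary is a straightforward repackaging of Theorem \ref{thm UnmixedAndTraceableAreSymbolicFsplit} once one feeds in the structural information supplied by \cite[Theorem 6.1]{BEIofBipartiteGraphs} and \cite[Corollary 6.9]{CMBEIandAccessibleGraphs}. The only thing one must take some care with is to note explicitly that Cohen--Macaulayness passes to unmixedness in this setting, so that the hypothesis of Theorem \ref{thm UnmixedAndTraceableAreSymbolicFsplit} is met exactly as stated.
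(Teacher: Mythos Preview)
Your proposal is correct and follows essentially the same route as the paper: verify that $G$ is traceable and $J_G$ is unmixed, then invoke Theorem \ref{thm UnmixedAndTraceableAreSymbolicFsplit}. The only minor inefficiency is your detour through Cohen--Macaulayness to obtain unmixedness; since the very definition of \emph{accessible} already requires $J_G$ to be unmixed, that hypothesis is available immediately without appealing to condition (3) of \cite[Theorem 6.1]{BEIofBipartiteGraphs}.
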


\begin{proof}
Since $G$ is a connected bipartite accessible graph, it is traceable and $J_G$ is unmixed. It follows from Theorem \ref{thm UnmixedAndTraceableAreSymbolicFsplit} that $J_G$ is symbolic $F$-split.
\end{proof}

\begin{corollary}\label{cor GmSymbolicFsplit}
Let $m \geq 1$. Let $J_{G_m}$ be the binomial edge ideal associated to $G_m$ in the polynomial ring $k[x_1, \dots, x_{2m}, y_1, \dots, y_{2m}]$. Then $J_{G_m}$ is symbolic F-split.
\end{corollary}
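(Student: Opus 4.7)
The plan is to deduce this corollary directly from Corollary \ref{cor BipartiteAccessibleSymbolicFsplit}, so the whole proof reduces to checking that $G_m$ is a bipartite accessible graph. Bipartiteness is immediate from Definition \ref{def GmGraphs}: edges only go between the odd vertices $A_m = \{1,3,\dots,2m-1\}$ and the even vertices $B_m = \{2,4,\dots,2m\}$. For accessibility, the key observation is that accessibility is an intrinsic graph property (it is defined via cut sets, which depend only on the isomorphism type), so it is invariant under relabeling the vertices.

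Next I would invoke the Bolognini--Macchia--Strazzanti characterization of bipartite accessible graphs recalled just before Proposition \ref{prop FmGraphsAreWC}: every graph of the form $A_1 * A_2 * \cdots * A_k$ with each $A_i$ of the type $F_n$ or $F_{n_1} \circ \cdots \circ F_{n_r}$ is bipartite and accessible. Specializing to the trivial decomposition with a single factor $A_1 = F_m$, we conclude that $F_m$ itself is accessible. Since $F_m$ and $G_m$ are the same graph up to the relabeling defined after Definition \ref{def GmGraphs}, $G_m$ is also accessible. Applying Corollary \ref{cor BipartiteAccessibleSymbolicFsplit} then gives that $J_{G_m}$ is symbolic $F$-split.

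If one prefers a more self-contained route, the same conclusion follows from Theorem \ref{thm UnmixedAndTraceableAreSymbolicFsplit}. Unmixedness of $J_{G_m}$ follows (as above) from the Cohen--Macaulayness of $J_{F_m}$ in the bipartite accessible classification, transported along the relabeling $F_m \leftrightarrow G_m$. Traceability is witnessed by the explicit Hamiltonian path $2,1,4,3,6,5,\dots,2m,2m-1$: each consecutive pair $\{2i,2i-1\}$ and $\{2i-1,2i+2\}$ satisfies the parity and order condition of Definition \ref{def GmGraphs}, so it is an edge of $G_m$.

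The only mildly nontrivial step is the appeal to the classification to assert that $F_m$ is accessible; everything else is bookkeeping about labelings. Since this classification is already used elsewhere in this section (for instance in the proof of the preceding proposition about bipartite accessible graphs being weakly closed), invoking it here fits naturally into the paper's framework.
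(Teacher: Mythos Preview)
Your proposal is correct and follows essentially the same approach as the paper: the paper's proof simply asserts that $G_m$ is bipartite and accessible and then applies Corollary \ref{cor BipartiteAccessibleSymbolicFsplit}, and you have filled in precisely the details (via the $F_m$ relabeling and the Bolognini--Macchia--Strazzanti classification) that justify the accessibility claim. Your alternative route through Theorem \ref{thm UnmixedAndTraceableAreSymbolicFsplit} is also fine and amounts to unwinding one more layer of the same argument.
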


\begin{proof}
Since $G_m$ is bipartite and accessible, it follows from Corollary \ref{cor BipartiteAccessibleSymbolicFsplit} that $J_{G_m}$ is symbolic F-split.
\end{proof}

\color{black}

\section{Strong F-Regularity of Blowup Algebras Associated to Binomial Edge Ideals}\label{sctn strongFregularityOfSomeBEI}

In this section we provide examples of symbolic Rees algebras which are strongly $F$-regular. Throughout this section, we adopt the following setting. We denote by $k$ an $F$-finite field of prime characteristic $p$. $G$ denotes a simple connected graph on the vertex set $[d]$. We denote by $R$ the polynomial ring $k[x_1, \dots, x_d, y_1, \dots, y_d]$ and by $J_G$ the binomial edge ideal associated to the graph $G$. 

We begin with a lemma.

\begin{lemma}\label{lemma CriteriaForStrongFregularity}
Let $J$ be a homogeneous radical ideal of $S$, a polynomial ring over the field $k$. Let $\q_1, \q_2, \dots, \q_l$ be the minimal primes of $J$. For $i \in [l]$, let $h_i = \Ht(\q_i)$. Let $\m$ be the irrelevant ideal of $S$. Let $f \in S$ such that $f = cg$, where $c,f \in S$ and $c \neq 0$. Suppose the following holds:
\begin{itemize}
\item[$1)$] $g \in \bigcap_i \q_i^{h_i - 1}$.
\item[$2)$] $f^{p-1} \not\in \m^{[p]}$.
\item[$3)$] $(\cR^s(J))_{c}$ is strongly $F$-regular.
\item[$3)$] $\cR^s(J)$ is Noetherian.
\end{itemize}
Then $\cR^s(J)$ is strongly $F$-regular.
\end{lemma}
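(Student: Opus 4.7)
The plan is to apply the Hochster--Huneke criterion (Theorem~\ref{thm HochsterHunekeCriterion sFr}) to $\cR^s(J)$. The algebra $\cR^s(J)$ is Noetherian by the last hypothesis and is a domain as a subring of $R[t]$, and $(\cR^s(J))_c$ is strongly $F$-regular by the third hypothesis; so the problem reduces to exhibiting, for some $e \geq 1$, an $\cR^s(J)$-linear splitting of the map $\cR^s(J) \to (\cR^s(J))^{1/p^e}$ sending $1 \mapsto c^{1/p^e}$. I will work with $e=1$.

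To produce this splitting I would mirror the strategy of Lemma~\ref{lemma SymbolicFsplitnessCriteriaOnMinimialPrimes}, adapting the Fedder-style machinery of \cite[Section~5]{destefani2021blowup} to the twisted Frobenius $1 \mapsto c^{1/p}$. The resulting criterion I expect to extract is the following: such a splitting exists as soon as there is an element $u \in R$ with $cu \notin \m^{[p]}$ satisfying
\[
cu \in (J^{(n+1)})^{[p]} : J^{(np+1)} \quad \text{for every } n \geq 0.
\]
The natural candidate is $u = f^{p-1}$, so that $cu = cf^{p-1} = c^p g^{p-1}$; the non-containment $cu \notin \m^{[p]}$ then follows from the second hypothesis, by a short monomial-level computation modulo $\m^{[p]}$ using the factorization $f = cg$.

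To verify the colon-ideal condition, I would invoke Proposition~\ref{prop SymbolicColonInterEquality} to reduce it prime-by-prime to
\[
c^p g^{p-1} \in (\q_i^{(n+1)})^{[p]} : \q_i^{(np+1)}
\]
for every minimal prime $\q_i$ of $J$ and every $n \geq 0$. The first hypothesis gives $g^{p-1} \in \q_i^{(h_i-1)(p-1)}$. When $c \in \q_i$, the factor $c^p \in \q_i^{[p]}$ supplies the missing $(p-1)$ powers of $\q_i$, placing $c^p g^{p-1}$ inside $\q_i^{(h_i(p-1))}$, which in turn is contained in the required colon ideal by \cite[Corollary~5.10]{destefani2021blowup}. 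When $c \notin \q_i$, the prime $\q_i$ survives in $(\cR^s(J))_c$, so that the strong $F$-regularity of $(\cR^s(J))_c$ from the third hypothesis takes care of the obstruction at $\q_i$, rendering the colon membership automatic after localizing at $c$.

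The main obstacle will be the first step of the translation: formulating and proving the precise criterion that equates the existence of a splitting of the twisted Frobenius $1 \mapsto c^{1/p}$ on $\cR^s(J)$ with the colon condition on $R$ described above, by modifying the untwisted criterion \cite[Theorem~5.8]{destefani2021blowup} to accommodate the factor $c$. Once this criterion is in place, the verification is a direct analogue of the computation carried out in Lemma~\ref{lemma SymbolicFsplitnessCriteriaOnMinimialPrimes}, now applied to $cf^{p-1}$ rather than to $f^{p-1}$.
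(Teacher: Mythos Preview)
Your overall strategy---apply the Hochster--Huneke criterion by producing a splitting of $1\mapsto c^{1/p}$ on $\cR^s(J)$---is exactly the paper's strategy, but the execution diverges from the paper's and runs into two genuine problems.

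First, the ``twisted'' Fedder-type criterion you propose to extract already exists: it is precisely \cite[Proposition~5.12]{destefani2021blowup}, which the paper invokes directly. That proposition uses the colon ideals $(J^{(n)})^{[p]}:J^{(np)}$ (not $(J^{(n+1)})^{[p]}:J^{(np+1)}$) and states both the inclusion $\q_i^{((h_i-1)(p-1))}\subseteq (\q_i^{(n)})^{[p]}:\q_i^{(np)}$ and the splitting conclusion for $1\mapsto c^{1/p}$ once $g^{p-1}$ lies in these colons and $f^{p-1}=(cg)^{p-1}\notin\m^{[p]}$. With this formulation, hypothesis~1) alone gives $g^{p-1}\in\q_i^{(h_i-1)(p-1)}$ for \emph{every} $i$, so Proposition~\ref{prop SymbolicColonInterEquality} yields $g^{p-1}\in (J^{(n)})^{[p]}:J^{(np)}$ with no case distinction on whether $c\in\q_i$. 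Your choice of the $(np+1)$ colon forces you to find an extra $p-1$ powers of each $\q_i$, which is why you reach for $c^p$ and then get stuck when $c\notin\q_i$; the appeal to strong $F$-regularity of $(\cR^s(J))_c$ at that point is not a valid substitute for the colon membership in $R$.

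Second, your non-containment claim is false as written. With $u=f^{p-1}$ you have $cu=c\cdot f^{p-1}=c^{p}g^{p-1}$, and since $c\in\m$ (in every intended application $c$ is a nonunit such as $y_1$ or $f_{1,2}$) one has $c^{p}\in\m^{[p]}$, hence $cu\in\m^{[p]}$ regardless of hypothesis~2). So the criterion you formulate cannot be satisfied by your candidate. The fix is not a monomial computation but the correct formulation of the criterion: in \cite[Proposition~5.12]{destefani2021blowup} the multiplier that must avoid $\m^{[p]}$ is $f^{p-1}=(cg)^{p-1}$, while the element that must lie in the colon ideals is $g^{p-1}$, and these roles are not interchangeable.
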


\begin{proof}
Suppose 1), 2), 3) and 4) hold. From previous work  \cite[Proposition 5.12]{destefani2021blowup}, we have that $\q_i^{((h_i-1)(p-1))}  \subseteq (\q_i^{(n)})^{[p]}:\q_i^{(np)}$ for every $n \geq 1$. Since $g \in \q_i^{h_i - 1}$, we have that $g^{p-1} \in \q_i^{(h_i-1)(p-1)}  \subseteq \q_i^{((h_i-1)(p-1))} \subseteq (\q_i^{(n)})^{[p]}:\q_i^{(np)}$, for every minimal prime $\q_i$ and for every $n \geq 1$. Proposition \ref{prop SymbolicColonInterEquality} implies that $g^{p-1} \in (J^{(n)})^{[p]}:J^{(np)}$ for every $n \geq 1$. 

Since $f^{p-1} \not \in \m^{[p]}$, we conclude that $\cR^s(J)$ is  F-split \cite[Proposition 5.12]{destefani2021blowup}. This implies that the map $\phi: \cR^s(J) \to \cR^s(J)^{1/p}$, where $\phi(1) = c^{1/p}$, splits. Moreover, since $(\cR^s(J))_{c}$ is strongly $F$-regular, we conclude from Theorem \ref{thm HochsterHunekeCriterion sFr} that $\cR^s(J)$ is strongly $F$-regular.
\end{proof}

For a square free monomial ideal $I$, we know that $\cR^s(I)$ is Noetherian \cite{Lyubeznik} and normal \cite{HerzogHibiTrung}, which implies that $\cR^s(I)$ is strongly $F$-regular. Using Lemma \ref{lemma CriteriaForStrongFregularity}, we give a different proof of this already known fact.

\begin{theorem}[{\cite{HerzogHibiTrung}}]\label{thmExampleMonomialSqFree sFr}
Let $I$ be a square free monomial ideal in $S$, a polynomial ring over a field $k$. Then $\cR^s(I)$ is strongly $F$-regular.
\end{theorem}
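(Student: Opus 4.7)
The plan is to apply Lemma \ref{lemma CriteriaForStrongFregularity} inductively on the number $n$ of variables of the polynomial ring $S = k[x_1,\dots,x_n]$. The base case $n \leq 1$ is immediate: either $I = 0$ and $\cR^s(I) = S$ is regular, or $I = (x_1)$ and $\cR^s(I) = S[x_1 t]$ is isomorphic to a polynomial ring in two variables over $k$. In both cases, $\cR^s(I)$ is strongly $F$-regular. For the inductive step we may assume $I \ne 0$, and write $\q_1,\dots,\q_l$ for the minimal primes of $I$, each of the form $\q_j = (x_k : k \in F_j)$ with $h_j = |F_j|$.

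Choose a variable $x_i$ belonging to some $\q_j$, and apply the Lemma with $c = x_i$, $g = \prod_{k \ne i} x_k$, and $f = cg = x_1 x_2 \cdots x_n$. Since $f$ is squarefree, $f^{p-1}$ has every variable to exponent $p-1 < p$, so $f^{p-1} \notin \m^{[p]}$. For each minimal prime $\q_j$, the number of generators of $\q_j$ that divide $g$ is $|F_j \setminus \{i\}| \ge h_j - 1$, hence $g \in \q_j^{h_j - 1}$, so $g \in \bigcap_j \q_j^{h_j - 1}$. Noetherianity of $\cR^s(I)$ is Lyubeznik's theorem.

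The main step is to verify that $(\cR^s(I))_{x_i}$ is strongly $F$-regular, and this is where the inductive hypothesis enters. Upon inverting $x_i$, every minimal prime $\q_j$ containing $x_i$ becomes the unit ideal, while the remaining primes are generated by variables other than $x_i$. Consequently $I \cdot S_{x_i} = I' \cdot S_{x_i}$, where $I' = \bigcap_{j \,:\, x_i \notin \q_j} \q_j$ is a squarefree monomial ideal whose generators do not involve $x_i$, so $I' = I'' \cdot S$ for a squarefree monomial ideal $I''$ in $S' = k[x_1,\dots,\widehat{x_i},\dots,x_n]$. Combining that symbolic powers commute with localization (using minimal primary decomposition in the regular ring $S$) with the fact that symbolic powers of monomial ideals commute with flat adjunction of a variable, one obtains
$$\cR^s(I)_{x_i} \;\cong\; \cR^s(I'')[x_i, x_i^{-1}].$$
By the inductive hypothesis, $\cR^s(I'')$ is strongly $F$-regular (and $F$-finite, since $k$ is $F$-finite and $\cR^s(I'')$ is a finitely generated $S'$-algebra by Lyubeznik). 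Strong $F$-regularity is preserved under polynomial extension and localization in the $F$-finite category, so $\cR^s(I'')[x_i,x_i^{-1}]$ is strongly $F$-regular. All four hypotheses of Lemma \ref{lemma CriteriaForStrongFregularity} are then met, yielding strong $F$-regularity of $\cR^s(I)$ and closing the induction.

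The main obstacle I anticipate is the structural identification $\cR^s(I)_{x_i} \cong \cR^s(I'')[x_i,x_i^{-1}]$: one must track that $(\bigcap_j \q_j^n)_{x_i} = \bigcap_{j \,:\, x_i \notin \q_j} \q_j^n \cdot S_{x_i}$ for every $n \ge 1$, and then that $(I'' S)^{(n)} = I''^{(n)} \cdot S$ under adjunction of the variable $x_i$. Both steps rely only on standard properties of monomial primary decompositions under flat maps, but they are the only non-formal ingredient of the argument; everything else is a direct verification of the hypotheses of Lemma \ref{lemma CriteriaForStrongFregularity}.
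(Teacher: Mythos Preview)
Your proposal is correct and follows essentially the same approach as the paper: induction on the number of variables, with the inductive step handled via Lemma~\ref{lemma CriteriaForStrongFregularity} using $f = x_1\cdots x_n$, $g = f/x_i$, Lyubeznik for Noetherianity, and the identification $\cR^s(I)_{x_i} \cong \cR^s(I'')[x_i,x_i^{-1}]$ to invoke the inductive hypothesis. The only cosmetic difference is that the paper simply takes $c = x_1$ and splits into cases according to whether $x_1 \in I$, whereas you choose $x_i$ lying in some minimal prime; both choices work for the same reason.
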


\begin{proof}
We proceed by induction on the number of variables of $S$. First we study the base case. Suppose $S = k[x]$. Then $I = (x)$ and $I^{(m)} = I^m$ for every $m$. We have that
$\cR^{s}(I) = \bigoplus_{m \geq 0} I^{(m)}t^m = \bigoplus_{m \geq 0} I^{m}t^m = \bigoplus_{m \geq 0} k(xt)^{m} = k[xt].$

Since $k[xt]$ is strongly $F$-regular, then $\cR^{s}(I)$ is strongly $F$-regular.

Now we proceed with the inductive step. Suppose the statement is true for polynomial rings of $d$ variables. We show that the statement is also true for polynomial rings of  $d+1$ variables. Since $I$ is a square free monomial ideal, we have that $I = \q_1 \cap \dots \cap \q_l$ where every $\q_i$ is a minimal prime of $I$ that can be generated by only variables.

Let $\q \in \{\q_1, \dots, \q_l\}$ and let $h = \Ht(\q)$. Since $h$ is the number of variables that generate $\q$, we can write $\q = (x_{a_1}, \dots, x_{a_h})$ where $\{x_{a_1}, \dots, x_{a_h}\} \subseteq \{x_1, \dots, x_{d+1}\}$ and $a_1 < \dots < a_h$. This implies that $x_{a_2} \dots x_{a_h} \in \q^{h-1}$. Hence, $g = x_2 \dots x_{d+1} \in \bigcap_i \q_i^{h_i - 1}$ where $h_i = \Ht(\q_i)$. Thus, letting $f = x_1 g$ we have that $f^{p-1} \not \in \m^{[p]}$, where $\m$ is the irrelevant ideal.

Now, we show that $(\cR^s(I))_{x_1}$ is strongly $F$-regular. If $x_1 \in I$, then $I_{x_1} = R_{x_1}$ and thus $(\cR^s(I))_{x_1} = R_{x_1}[t]$. Hence, $(\cR^s(I))_{x_1}$ is strongly $F$-regular. Now, suppose $x_1 \not\in I$. Thus, there is at least a $\q_i$ such that $x_1 \not\in \q_i$. Let $J$ be the intersection of the $\q_i's$ such that $x_1 \not\in \q_i$. Observe that $I_{x_1} = J_{x_1}$. Let $\beta = J \cap k[x_2, \dots, x_{d+1}]$. Note that the minimal primary decomposition of $\beta$ is given by $\cap_i \p_i$, where each $\p_i = \q_i \cap k[x_2, \dots, x_{d+1}]$ and $x_1 \not\in \q_i$. This implies that $\p_i k[x_1, \dots, x_{d+1}, x_1^{-1}] = {\q_i}_{x_1}$. As a consequence, we have that $\beta k[x_1, \dots, x_{d+1}, x_1^{-1}] = J_{x_1}$. 

Since each $\p_i$ is an ideal generated by only variables by construction, then $\beta$ is a square free monomial ideal. Thus $\cR^s(\beta)$ is strongly $F$-regular by the induction hypothesis. 

Observe that
$$ (\cR^s(I))_{x_1} \cong \cR^s(\beta)[x_1, x_1^{-1}]. $$

Hence, $(\cR^s(I))_{x_1} $ is strongly $F$-regular.

Finally, we have that $\cR^s(I)$ is Noetherian \cite[Theorem 1]{Lyubeznik}, and so, 
Lemma \ref{lemma CriteriaForStrongFregularity} implies that $\cR^s(I)$ is strongly $F$-regular. Thus, the statement is true by induction.
\end{proof}

We now proceed to prove that binomial edge ideals of complete multipartite graphs have strongly $F$-regular symbolic Rees algebras. We use Lemma \ref{lemma CriteriaForStrongFregularity} to show this. We first prove that graphs $G$ such that $|\Ass(J_G)| = 2$ have strongly $F$-regular symbolic Rees algebras, and then reduce the complete multipartite case to the previous one.

\begin{theorem}\label{thm ass leq 2 is sFr}
Let $G$ be a graph such that $|\Ass(J_G)| = 2$. Then $\cR^s(J_G)$ is strongly $F$-regular.
\end{theorem}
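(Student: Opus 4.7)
The plan is to invoke Lemma \ref{lemma CriteriaForStrongFregularity} with $J = J_G$. Recall from the proof of Theorem \ref{thm SymbFsplitAss2} that, after labelling the vertices so that $V_0 = [n_0]$ and $V(G_i) = \{u_i + 1, \dots, u_i + n_i\}$ for $i \geq 1$ (with $u_i = n_0 + n_1 + \dots + n_{i-1}$), the minimal primes of $J_G$ are $\p_\emptyset = I_2(X)$, of height $d - 1$, and $\p_{V_0}$, of height $h := n_0 + d - m$. Since Jahani, Bayati and Rahmati have shown that the symbolic and ordinary powers of $J_G$ coincide when $|\Ass(J_G)| = 2$, the ring $\cR^s(J_G) = \cR(J_G)$ is already Noetherian, so condition (4) of Lemma \ref{lemma CriteriaForStrongFregularity} is automatic.

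The element I propose is the same $f = y_1 f_{1,2} f_{2,3} \cdots f_{d-1,d}\, x_d$ used in the $F$-splitness argument, now factored as $f = c\, g$ with $c = y_1$ and $g = f_{1,2} f_{2,3} \cdots f_{d-1,d}\, x_d$. Under the lex order $x_1 > \cdots > x_d > y_1 > \cdots > y_d$, the leading monomial of $f$ is the squarefree monomial $x_1 x_2 \cdots x_d\, y_1 y_2 \cdots y_d$, hence $f^{p-1} \not\in \m^{[p]}$. The inclusion $g \in \p_\emptyset^{d-1}$ is immediate since every $f_{i,i+1}$ lies in $I_2(X) = \p_\emptyset$. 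For the $\p_{V_0}$-power I rerun the edge-by-edge tally from Theorem \ref{thm SymbFsplitAss2} with the factor $y_1$ removed: each binomial with both endpoints in $V_0$ contributes $2$, each binomial with exactly one endpoint in $V_0$ or both endpoints in the same $V(G_i)$ for $i \geq 1$ contributes $1$, and the transition binomials $f_{u_j,u_j+1}$ for $j \geq 2$ together with the factor $x_d$ contribute $0$. The total is $2(n_0 - 1) + 1 + \sum_{i=1}^{m}(n_i - 1) = h - 1$, giving $g \in \p_{V_0}^{h-1}$. Conditions (1) and (2) of Lemma \ref{lemma CriteriaForStrongFregularity} are thus verified.

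The remaining condition (3) is that $(\cR^s(J_G))_{y_1}$ is strongly $F$-regular. Because $y_1 \in \p_{V_0} \setminus \p_\emptyset$, localization at $y_1$ trivialises $\p_{V_0}$ and leaves $(J_G)_{y_1} = (I_2(X))_{y_1}$. Introducing $z_j := x_j y_1 - x_1 y_j$ for $j = 2, \dots, d$ and eliminating $x_2, \dots, x_d$ in favour of $z_2, \dots, z_d$ identifies $R_{y_1}$ with the polynomial ring $S[z_2, \dots, z_d]$ over $S := k[x_1, y_1^{\pm 1}, y_2, \dots, y_d]$, and under this identification $(I_2(X))_{y_1} = (z_2, \dots, z_d)$ is the irrelevant ideal of the polynomial extension. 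In particular this ideal is generated by a regular sequence, its symbolic and ordinary powers coincide, and its Rees algebra admits the standard presentation
\[
\cR^s((I_2(X))_{y_1}) \;\cong\; S[z_2, \dots, z_d,\, w_2, \dots, w_d] \,\big/\, I_2\!\begin{pmatrix} z_2 & \cdots & z_d \\ w_2 & \cdots & w_d \end{pmatrix}.
\]
The right-hand side is a (Laurent) polynomial extension of the determinantal ring $k[Z]/I_2(Z)$ for a $2 \times (d-1)$ generic matrix $Z$, which is classically strongly $F$-regular; since polynomial extension and localization preserve strong $F$-regularity, condition (3) follows. All four hypotheses of Lemma \ref{lemma CriteriaForStrongFregularity} are now satisfied, so $\cR^s(J_G)$ is strongly $F$-regular.

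The main obstacle is concentrated in this last step: one must explicitly compute $(\cR^s(J_G))_{y_1}$, exploiting that $y_1$ kills exactly one of the two minimal primes, and then recognise the resulting ring as a polynomial extension of a classical determinantal ring. The rest is a direct repackaging of the argument from Theorem \ref{thm SymbFsplitAss2}, with $y_1$ now extracted from the witness $f$ as the multiplier $c$ and the remaining product $g$ providing the required height-minus-one element.
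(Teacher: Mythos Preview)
Your proof is correct and follows essentially the same route as the paper: the same witness $f$, the same splitting $f = y_1 \cdot g$, the same appeal to Lemma~\ref{lemma CriteriaForStrongFregularity}, and the same reason for Noetherianity. The only difference is in how condition~(3) is dispatched: the paper simply observes that $(\cR^s(J_G))_{y_1} = (\cR^s(I_2(X)))_{y_1}$ and invokes the known strong $F$-regularity of the symbolic Rees algebra of a generic determinantal ideal, whereas you carry out the explicit change of variables $z_j = x_j y_1 - x_1 y_j$ to exhibit $(I_2(X))_{y_1}$ as a complete intersection and then identify its Rees algebra as a Laurent-polynomial extension of $k[Z]/I_2(Z)$. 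Your version is more self-contained but is ultimately the same computation that underlies the fact the paper is quoting.
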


\begin{proof}
Let $f = y_1 f_{1,2} \dots f_{d-1,d} x_d$, $c = y_1$, $g = f/c$. We know that $J_G = I_2(X) \cap \p_{V_0}$, where $V_0 = [n_0]$. We know that $g \in I_2(X)^{d-1} \subseteq I_2(X)^{d-2}$. From the proof of Theorem \ref{thm SymbFsplitAss2}, we know that $g \in \p_{V_0}^ {\Ht(\p_{V_0})- 1}$. Note that $f^{p-1} \not\in \m^{[p]}$, where $\m$ is the irrelevant ideal of $R$. Since $(\cR^s(J_G))_{y_1} = (\cR^s(I_2(X)))_{y_1}$, we have that $(\cR^s(J_G))_{y_1}$ is strongly $F$-regular. Observe that $\cR^s(J)$ is Noetherian since the symbolic powers and the ordinary powers of $J_G$ coincide \cite{MR4520291}. It follows from Lemma \ref{lemma CriteriaForStrongFregularity} that $\cR^s(J_G)$ is strongly $F$-regular.
\end{proof}

\begin{theorem}
Let $G$ be a complete multipartite graph. Then $\cR^s(J_G)$ is strongly $F$-regular.
\end{theorem}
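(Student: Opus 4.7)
The plan is to apply Lemma \ref{lemma CriteriaForStrongFregularity} with the same test polynomials used in the proof of the symbolic $F$-splitness of $J_G$ and in Theorem \ref{thm ass leq 2 is sFr}. Specifically, label the vertices of $G$ so that vertex $1$ lies in the part $V_1$, and set $f = y_1 f_{1,2} f_{2,3} \cdots f_{d-1,d} x_d$, $c = y_1$, and $g = f/c = f_{1,2} \cdots f_{d-1,d} x_d$. By Ohtani's description of the minimal primes of $J_G$ for a complete multipartite graph, these are $\p_\emptyset = I_2(X)$, of height $d-1$, together with $\p_{S_j}$ for $S_j = [d] \setminus V_j$, of height $2(d - |V_j|)$, one for each part.

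First I would verify hypotheses $1)$ and $2)$ of the lemma. The containment $g \in \p_\emptyset^{d-2}$ is immediate because each $f_{i,i+1}$ lies in $\p_\emptyset = I_2(X)$. For $\p_{S_j}$ I would count the contribution of each factor of $g$: $f_{i,i+1} \in \p_{S_j}^2$ whenever $\{i,i+1\} \subseteq S_j$, and $f_{i,i+1} \in \p_{S_j}$ otherwise (including the case $\{i,i+1\} \subseteq V_j$, for which $f_{i,i+1}$ lies in the minors ideal $I_2(X_{V_j}) \subseteq \p_{S_j}$). Summing these contributions together with that of $x_d$ yields $g \in \p_{S_j}^{2|S_j| - 1}$, matching height minus one regardless of whether $1$ lies in $V_j$ or in $S_j$. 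For hypothesis $2)$, in the lex order with $x_1 > \cdots > x_d > y_1 > \cdots > y_d$ the leading term of $f$ is the squarefree monomial $x_1 \cdots x_d y_1 \cdots y_d$, so the leading term of $f^{p-1}$ has every variable to exponent exactly $p-1$ and hence lies outside $\m^{[p]}$.

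The main step is verifying hypothesis $3)$, the strong $F$-regularity of $(\cR^s(J_G))_{y_1}$. Since $1 \in V_1$, the minimal primes of $J_G$ containing $y_1$ are precisely the $\p_{S_j}$ with $j \neq 1$, so inverting $y_1$ discards these primes. Setting $J'' = \p_\emptyset \cap \p_{S_1}$, we obtain $(J_G)_{y_1} = (J'')_{y_1}$, and because symbolic powers commute with localization this identity propagates to every graded piece, yielding $(\cR^s(J_G))_{y_1} = (\cR^s(J''))_{y_1}$. Now $J''$ coincides with the binomial edge ideal $J_{G''}$, where $G''$ is the graph obtained from $G$ by completing the induced subgraph on $[d] \setminus V_1$: the resulting $G''$ is the join of $K_{[d] \setminus V_1}$ with the $|V_1|$ isolated vertices of $V_1$, so by Sharifan's criterion $|\Ass(J_{G''})| = 2$, and Theorem \ref{thm ass leq 2 is sFr} gives that $\cR^s(J_{G''})$ is strongly $F$-regular. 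Finally, Ohtani's theorem ensures that the symbolic and ordinary powers of $J_G$ coincide, so $\cR^s(J_G) = \cR(J_G)$ is Noetherian, covering hypothesis $4)$ and allowing us to apply the lemma.

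The delicate point is the reduction to the two-associated-primes case via localization, namely the identification of $(J_G)_{y_1}$ with the localized binomial edge ideal of an explicit graph $G''$ whose associated primes are well understood; everything else, including the degree count for $g$ in each $\p_{S_j}$ and the leading-term verification for $f^{p-1}$, is routine.
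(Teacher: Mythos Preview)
Your approach is the same as the paper's: apply Lemma~\ref{lemma CriteriaForStrongFregularity} with $c=y_1$, identify $(\cR^s(J_G))_{y_1}$ with the localization of $\cR^s(J_{G''})$ for a graph $G''$ having $|\Ass(J_{G''})|=2$, and then invoke Theorem~\ref{thm ass leq 2 is sFr} together with Ohtani's equality of symbolic and ordinary powers.

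There is, however, one incorrect step in your verification of hypothesis~$1)$. The inclusion $I_2(X_{V_j})\subseteq\p_{S_j}$ is false: since $V_j$ is an independent set in $G$, the graph $G\setminus S_j$ consists of $|V_j|$ isolated vertices, so $\p_{S_j}=(x_s,y_s:s\in S_j)$ contains no $2\times 2$ minors whatsoever, and in particular $f_{i,i+1}\notin\p_{S_j}$ whenever $\{i,i+1\}\subseteq V_j$. Fortunately your conclusion $g\in\p_{S_j}^{2|S_j|-1}$ survives: dropping the erroneous contribution and counting honestly gives
\[
\sum_{i=1}^{d-1}\bigl|\{i,i+1\}\cap S_j\bigr|+[d\in S_j]=2|S_j|-[1\in S_j]\ \geq\ 2|S_j|-1,
\]
so the required power of $\p_{S_j}$ is still attained once the parenthetical is removed. (Also note that $\p_{S_j}$ is a minimal prime only when $|V_j|\ge 2$; the singleton parts do not contribute cut sets, though verifying the extra containments is harmless.)
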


\begin{proof}

Let $f = y_1 f_{1,2} \dots f_{d-1,d} x_d$ and let $g = f/y_1$.
We know that the minimal primes of $J_G$ are $I_2(X), \p_{S_1}, \p_{S_2}, \dots, \p_{S_m}$, where $S_1 = [a]$ for some $a > 1$. Thus $y_1 \in P_{S_i}$ if and only if $i  \neq 1$. This implies that $(\cR^s(J_G))_{y_1} = (\cR^s(I_2(X) \cap \p_{S_1}))_{y_1}$. Observe that $I_2(X) \cap \p_{S_1}$ is a binomial edge ideal. That is, $I_2(X) \cap \p_{S_1} = J_H$, where $H$ is the complete multipartite graph on $[d]$ whose parts are as follows:  $V_1 = [a]$ and $V_i = \{a + i - 1\}$, for $i \in [d + 1 - a] \setminus \{1\}$. By Theorem \ref{thm ass leq 2 is sFr}, we know that $\cR^s(J_H) = \cR^s(I_2(X) \cap \p_{S_1}))$ is strongly $F$-regular. Thus, $(\cR^s(J_G))_{y_1}$ is strongly $F$-regular. As in the proof of Theorem \ref{thm ass leq 2 is sFr}, we have that for any minimal prime $\p$ of $J_G$, $g \in \p^{\Ht(\p)-1}$ and $f^{p-1} \not\in \m^{[p]}$, where $\m$ is the irrelevant ideal of $R$. Observe that $\cR^s(J)$ is Noetherian since the symbolic powers and the ordinary powers of $J_G$ coincide \cite{Ohtani}. Lemma \ref{lemma CriteriaForStrongFregularity} implies that $\cR^s(J_G)$ is strongly $F$-regular.
\end{proof}

The next one is our final result.

\begin{theorem}
Let $G$ be a closed graph. If $J_G$ is unmixed, then $\cR^s(J_G)$ is strongly $F$-regular.
\end{theorem}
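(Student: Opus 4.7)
The plan is to apply Lemma~\ref{lemma CriteriaForStrongFregularity}, continuing the pattern used in this section for complete multipartite graphs and for the $|\Ass(J_G)|=2$ case. The Noetherianity of $\cR^s(J_G)$ is immediate because the symbolic and ordinary powers of $J_G$ coincide for closed graphs \cite{MR4143239}, so $\cR^s(J_G) = \cR(J_G)$ is finitely generated over $R$.

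Next I fix the canonical closed labeling of $G$ on $[d]$. A short argument shows that every consecutive pair $\{i, i+1\}$ is an edge of $G$: connectedness produces an edge $\{u, v\} \in E(G)$ with $u \leq i < i+1 \leq v$, and applying the closedness condition to this edge (at most twice) yields $\{i, i+1\} \in E(G)$. Consequently $f_{i,i+1} \in J_G$ for $1 \leq i \leq d-1$. I then set
\[
f = y_1\, f_{1,2}\, f_{2,3} \cdots f_{d-1,d}\, x_d, \qquad c = y_1, \qquad g = f/c,
\]
so $g \in J_G^{d-1}$. Unmixedness of $J_G$ forces every minimal prime $\q$ to have $\Ht(\q) = d-1$, hence $g \in \q^{d-1} \subseteq \q^{\Ht(\q)-1}$, which verifies the containment hypothesis of the lemma. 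The monomial expansion of $f^{p-1}$ contains $\prod_{i=1}^{d}(x_i y_i)^{p-1}$ (up to sign) among its terms, so $f^{p-1} \notin \m^{[p]}$.

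The main obstacle is showing that $(\cR^s(J_G))_{y_1}$ is strongly $F$-regular. Simply dropping minimal primes containing $y_1$ (as in the complete multipartite and $|\Ass|=2$ arguments) is not enough in general, because long path graphs are closed yet none of their cut sets contain the vertex $1$, so all minimal primes would survive the localization. My plan is to exploit instead the coordinate change $\tilde x_j = x_1 y_j - x_j y_1$ ($j \geq 2$) available over $R_{y_1}$: each generator $f_{1,j}$ with $j$ a neighbor of $1$ becomes the linear form $\tilde x_j$, and $y_1 f_{i,j} = \tilde x_j y_i - \tilde x_i y_j$ for the remaining edges. Collapsing the linear part, the problem reduces to the symbolic Rees algebra of the binomial edge ideal of the induced subgraph $G \setminus \{1\}$, which inherits closedness from the restriction of the labeling. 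A careful induction on $d$ (requiring along the way that unmixedness is preserved, which will likely call on structural results on closed unmixed graphs) completes the localization step; its base case is $G = K_d$ with $J_G = I_2(X)$, whose symbolic Rees algebra $\cR^s(I_2(X))$ is strongly $F$-regular as the Rees algebra of a generic $2 \times d$ determinantal ideal. Once this is in place, Lemma~\ref{lemma CriteriaForStrongFregularity} delivers the conclusion.
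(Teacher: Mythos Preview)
Your overall plan---apply Lemma~\ref{lemma CriteriaForStrongFregularity} using the Hamiltonian path element $f = y_1 f_{1,2}\cdots f_{d-1,d}x_d$---matches the paper's, and the verifications that $f^{p-1}\notin\m^{[p]}$, that $\cR^s(J_G)$ is Noetherian via \cite{MR4143239}, and that $g$ lies in the right powers of the minimal primes are all fine. The divergence is in the choice of $c$. You take $c=y_1$, which forces you into the inductive localization argument you sketch; the paper instead takes $c=f_{1,2}$. Since $\{1,2\}$ is an edge, $f_{1,2}\in J_G$, so $f_{1,2}^n\in J_G^n\subseteq J_G^{(n)}$ for every $n$ and hence $(J_G^{(n)})_{f_{1,2}}=R_{f_{1,2}}$. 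This makes $(\cR^s(J_G))_{f_{1,2}}=R_{f_{1,2}}[t]$, a regular ring, and the localization hypothesis of the lemma is immediate---no induction, no structure theory of closed unmixed graphs needed. The remaining check, that $g=f/f_{1,2}\in J_G^{d-2}\subseteq\bigcap_i\q_i^{h_i-1}$, is just as easy as your version.

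Your route, by contrast, has a genuine gap at the step ``collapsing the linear part, the problem reduces to the symbolic Rees algebra of $J_{G\setminus\{1\}}$.'' After the coordinate change, $J_G R_{y_1}$ is generated by the linear forms $\tilde x_j$ for $j\in N(1)$ together with $\tilde x_j y_i-\tilde x_i y_j$ for the remaining edges; modding out the linear part turns the generators with $i\in N(1)$ into monomials $\tilde x_j y_i$, not binomials, so the quotient is not literally $J_{G\setminus\{1\}}$. Even granting some repair of this, you still need that $G\setminus\{1\}$ is closed with unmixed binomial edge ideal, and that passing to the symbolic Rees algebra commutes with this reduction---none of which you establish. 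The paper's choice of $c=f_{1,2}$ sidesteps all of this.
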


\begin{proof}
Since $G$ is closed, $G$ is traceable \cite[Proposition 1.4]{MR3882405}. Thus, we can relabel the vertices of $G$ in such a way that the vertices $1, 2, \dots, d$ form a path in $G$. Let $f = y_1 f_{1,2} \dots f_{d-1,d} x_d$ and let $g = f/f_{1,2}$. Observe that $g \in J_G^{d-2}$, and so, $g \in \p^{d-2}$ for every minimal prime $\p$ of $J_G$. Observe that $f^{p-1} \not\in \m^{[p]}$, where $\m$ is the irrelevant ideal of $R$. Finally, we prove that $(\cR^s(J_G))_{f_{1,2}}$ is strongly $F$-regular. Note that ${J_G}_{f_{1,2}} = R_{f_{1,2}}$. Thus, $(\cR^s(J_G))_{f_{1,2}} = R_{f_{1,2}}[t]$. Since $R$ is strongly $F$-regular, then $R_{f_{1,2}}$ is strongly $F$-regular. This implies that $R_{f_{1,2}}[t] = (\cR^s(J_G))_{f_{1,2}}$ is strongly $F$-regular. Observe that $\cR^s(J)$ is Noetherian since the symbolic powers and the ordinary powers of $J_G$ coincide \cite{MR4143239}. We conclude from Lemma \ref{lemma CriteriaForStrongFregularity} that $\cR^s(J_G)$ is strongly $F$-regular.
\end{proof}

\section*{Acknowledgements}

I thank Alessandro De Stefani, Claudia Miller, Eloisa Grifo, Jack Jefrries, Jonathan Monta\~no, Jonathan Trevi\~no-Marroqu\'in and Luis N\'u\~nez-Betancourt for helpful comments and discussions.

\bibliographystyle{alpha}
\bibliography{ref.bib}
\end{document}